\newtheorem{theorem}{Theorem}[section]
\newtheorem{corollary}[theorem]{Corollary}
\newtheorem{definition}[theorem]{Definition}
\newtheorem{example}[theorem]{Example}
\newtheorem{lemma}[theorem]{Lemma}
\newcommand{\mb}[1]{\mathbf{ #1}}
\newcommand{\mc}[1]{\mathcal{ #1}}
\newcommand{\te}{\theta}
\newcommand{\3}{\mathbf 3}
\newcommand{\ty}[1]{\langle #1\rangle}
\newcommand{\ov}{\overline}
\newcommand{\dgr}{^{\dagger}}
\newcommand{\upr}{\mathord\uparrow}
\newcommand{\dnr}{\mathord\downarrow}
\newcommand{\cmp}{^{\star}}
\newcommand{\leqnd}{\leq_{\land}}
\newcommand{\leqr}{\leq_{\lor}}
\newcommand{\tsp}{2space$^{\star}$ }
\title[Stone-type representations of bisemilattices]{Stone-type representations and dualities for varieties of bisemilattices}
\author[Antonio Ledda]{Antonio Ledda}
\address{Antonio Ledda, Universit\`a di Cagliari, via Is Mirrionis 1, 09123, Cagliari,
Italia}
\email{antonio.ledda@unica.it}
\date{\today}
\begin{document}
\begin{abstract}
In this article we will focus our attention on the variety of distributive bisemilattices and some linguistic expansions thereof: bounded, De Morgan,  and involutive bisemilattices. After extending Balbes' representation theorem to bounded, De Morgan, and involutive bisemilattices, we make use of Hartonas-Dunn duality and introduce the categories of 2spaces and 2spaces$^{\star}$. The categories of 2spaces and 2spaces$^{\star}$ will play with respect to the categories of distributive bisemilattices and De Morgan bisemilattices, respectively, a role analogous to the category of Stone spaces with respect to the category of Boolean algebras. Actually, the aim of this work is to  show that these categories are, in fact, dually equivalent.
\end{abstract}
\keywords{Bisemilattices, distributive bisemilattices, De Morgan bisemilattices, Stone duality, semilattices. \\
{\bf MSC classification.} Primary: 06A12, 06E15. Secondary: 06D50, 18B35.}

\maketitle

\section{Introduction}
It is a long-dated result, due to M. Stone \cite{St36}, that the theory of Boolean algebras (the algebraic counterpart of classical logic) can be framed within the theory of sets.  Stone's representation theorem for Boolean algebras has inspired an ample supply of dualities between appropriate categories of topological spaces and categories of partially ordered sets. Among them we may mention, e.g., Stone's duality for Boolean algebras \cite{St37}, Priestley's duality for distributive lattices \cite{Pr70, Pr72}, Hartonas-Dunn's duality for semilattices and lattices \cite{HD97}. \\

Bisemilattices --- or quasilattices, according to \cite{Pl67} --- were introduced by J. P\l onka as a common generalization of semilattices and lattices. Then, it was Padmanabhan who first introduced the name bisemilattices for these structures. Since this terminology has become standard over the years, in the present work we will adopt the name bisemilattices. Over the last 50 years, these algebraic structures have been the object of study of a considerable number of scholars. Among them we may mention R. Balbes, J. A. Kalman, A. Knoebel, R. McKenzie, J. P\l onka, and A. Romanowska \cite{Ba70, Ka, Kn, MRo, Pl67, GR91, Ro80, Ro80b, RoS81, Ro82}. \\

In this work we will focus our attention on a prominent class of bisemilattices, the variety of \emph{distributive bisemilattices} \cite{Pl67, MRo} and some remarkable linguistic expansions thereof: \emph{bounded distributive bisemilattices} \cite{FG93}, \emph{distributive De Morgan bisemilattices} \cite{Br00}, and \emph{involutive bisemilattices} \cite{GPBP}.\footnote{In fact, several authors (e.g. R. Balbes) consider bisemilattices as structures in the language $\{\land, \lor\}$ (see page \pageref{def:bsmlttc}). In general the involution operator $'$ and the constants $0,1$ need not be necessarily present.} Let us remark that, in general, bisemilattices are not assumed to be distributive, as in, e.g., \cite{Br00}. However, distributivity will be of crucial importance for our arguments, and it will be assumed throughout the development of our discourse. In fact, our starting point will be the class of distributive bisemilattices: the variety generated by the three-element algebra in Example \ref{exmp:3red}, which is the algebraic structure arising from the bisemilattice reduct of the tables of the \emph{weak propositional connectives} (introduced by Kleene in \cite{Kl52}). For a wider account on the logical aspects we refer the reader to section \ref{sec:motiv}.

As we mentioned at the beginning of this introduction, Stone's representation theorem for Boolean algebras has triggered a wide supply of results that build bridges between certain categories of lattice-based structures, and certain categories of topological spaces. From our perspective these connections are interesting, in general, because they represent a ``Rosetta's Stone'' deciphering a trilingual text: logical, algebraic, and topological. Perhaps, this sort of bridges may bring new tools for tackling old problems and shine a new light on well known concepts. A prominent example is in computer science, where the theory of domains provides a mathematical foundation for the semantics of programming languages (a comprehensive account can be found in \cite{GHKL}). As well as defining domains, D. Scott showed that they can be turned into a topological setting. And then M. Smyth observed that this is not just a technical trick. Scott topology captures a fundamental concept: open sets correspond to semi-decidable properties. This outstanding achievement paved the way for open sets to a new life, independent from the points of the original topological space, and led to a connection with an apparently unrelated field of mathematics: locale theory.

{In this article we discuss some dualities for distributive bisemilattices, bounded distributive bisemilattices, distributive De Morgan bisemilattices and involutive bisemilattices \cite{AA, Be, Br02, GPBP, GWW, GWW1, Ko, Ma, MoPi}, the algebraic alter-ego of Bochvar's internal logic and paraconsistent weak Kleene's logic (see section \ref{sec:motiv}). However, before explaining the details of our strategy, it is perhaps worth mentioning that}, in the nineties, Gierz and Romanowska \cite{GR91} (the interested reader may also consult the work of Romanowska and Smith \cite{RS96}) established a duality between distributive bisemilattices on the one hand, and compact totally disconnected partially ordered left normal bands on the other hand. Their results make essential use of the techniques developed in the theory of natural dualities \cite{CD}. The starting point is a ``schizophrenic'' object $H$ that lives in both categories (in the category in question and in its dual), and then consider the contravariant functors $\mathtt{Hom}(\_, H)$ to obtain a duality between the two categories.

In the present work, our first inspiration will be Balbes' set theoretical representation of distributive bisemilattices \cite[Thm. 4]{Ba70}: a representation theorem for distributive bisemilattices which parallels the representation of distributive lattices as rings of sets. After extending Balbes' result to bounded distributive bisemilattices, distributive De Morgan bisemilattices, and involutive bisemilattices (for these notions we refer the reader to section \ref{sec:bsml}), we take advantage of several ideas from Goldblatt \cite{Go75} and Hartonas-Dunn \cite{HD97} (in particular the notion of Fspace) and, in sections \ref{sec:top} and \ref{sec:demor}, we introduce the categories of 2spaces and 2spaces$^{\star}$. These categories will play with respect to the categories of distributive bisemilattices and distributive De Morgan bisemilattices, respectively, a r\^ole analogous to the one played by the category of Stone spaces with respect to the category of Boolean algebras. We will show, in fact, that these categories are dually equivalent. 

More precisely, we take advantage of Hartonas and Dunn's results on semilattices \cite{HD97} and use their notion of Fspace (see Definition \ref{def:mFsp}) for introducing the concept of 2space, which generalises their  $\bot$-frames (see Definition \ref{def:frame}) to a context in which the bijective correspondence between the Fspaces involved need not be in general a dual isomorphism, but could be as arbitrary as possible.\\

In order to keep this article self-contained, in section \ref{sec:bsml}, all the notions required from the theory of bisemilattices are dispatched. In section \ref{sec:motiv}, we propose a discussion on their logical significance. In section \ref{sec:rep}, we discuss Balbes' representation theorem and its extensions to the cases of bounded distributive, De Morgan distributive, and involutive bisemilattices. In section \ref{sec: fspcs}, we introduce Fspaces, the topological alter-egos of meet and join semilattices.
In section \ref{sec:top}, putting together the notion of Fspace with the representation theorem from section \ref{sec:rep}, a duality between the category of distributive bisemilattices and the category of 2spaces is established. Finally, in section \ref{sec:demor}, we enrich the notion of 2space and introduce the concept of 2space$^{\star}$, and we extend the duality to the categories of distributive De Morgan and involutive bisemilattices.

\section{Bisemilattices}\label{sec:bsml}
As we mentioned in the introduction, bisemilattices were firstly considered by J. P\l onka as a common generalization of semilattices and lattices, and, over the years, they have kindled the attention of several scholars.
An extensive guide to the bibliography on semirings --- of which distributive bisemilattices form a prominent subvariety --- can be found in K. Glazek's book \cite{Gl02} (for recent developments the interested reader may also consult \cite{BCL, Ha16a, Ha16b}).\\

\label{def:bsmlttc}In particular, a bisemilattice is an algebra $\mb A = \langle A, \land, \lor\rangle$ of type $\langle 2,2\rangle$ such that the reducts $\langle A, \land\rangle$ and $\langle A, \lor\rangle$ are semilattices. 
It is called \emph{distributive} in case $\land$ distributes over $\lor$, and $\lor$ distributes over $\land$. 
From an intuitive point of view, a (distributive) bisemilattice `is almost a (distributive) lattice' except for the fact that absorption's identities need not be satisfied. This supposedly mild failure strikes a major difference between lattices and bisemilattices. Indeed, if in the former case the orders induced by meet and join coincide, this is no longer so in the latter case. In principle, the partial orders induced by $\land$ and $\lor$ can be as arbitrary as possible, bearing no relation with each other.

We denote the variety of distributive bisemilattices by $\mc{DBS}$. For a bisemilattice $\mb A = \langle A, \land, \lor\rangle$, sometimes, we will adopt the notation $\ty{A, \leq_{\land}}$ and $\ty{A, \leq_{\lor}}$ to refer to the partially ordered sets naturally associated to the semilattice reducts $\langle A, \land\rangle$ and $\langle A, \lor\rangle$, respectively. All these observations are compactly subsumed in the following result, which is part of the folklore on the subject:
\begin{theorem}
There is a one-to-one correspondence between the variety of distributive bisemilattices and the class of bi-relational systems of the form $\ty{A,\leq_{\land},\leq_{\lor}}$, where $\ty{A,\leq_{\land},\land}$, $\ty{A,\leq_{\lor},\lor}$ are meet and join semilattices, respectively, that satisfy:
\begin{eqnarray*}
x\land(y\lor z)&\approx&(x\land y)\lor (x\land z);\\
x\lor(y\land z)&\approx&(x\lor y)\land (x\lor z).
\end{eqnarray*}
\end{theorem}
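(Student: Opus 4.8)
The plan is to exhibit the correspondence explicitly, on a fixed carrier $A$, and to check that the two natural passages between the two classes of structures are mutually inverse. In one direction, given a distributive bisemilattice $\alga=\ty{A,\land,\lor}\in\mc{DBS}$, I would associate to it the bi-relational system $\ty{A,\leqnd,\leqr}$ where, as is standard for semilattices, $a\leqnd b$ iff $a\land b=a$ and $a\leqr b$ iff $a\lor b=b$. The usual semilattice folklore gives that $\ty{A,\leqnd}$ is a poset in which every pair $\{a,b\}$ has a greatest lower bound, namely $a\land b$, and dually $\ty{A,\leqr}$ is a poset in which every pair has a least upper bound $a\lor b$; hence $\ty{A,\leqnd,\land}$ and $\ty{A,\leqr,\lor}$ are a meet and a join semilattice, respectively. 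Since the operations recovered from these two orders are literally the original $\land$ and $\lor$, the two distributivity identities required of the bi-relational system are exactly the defining identities of $\mc{DBS}$, hence hold.

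In the other direction, given a bi-relational system $\ty{A,\leqnd,\leqr}$ as in the statement, I would define $\land$ to be the binary meet furnished by the meet semilattice $\ty{A,\leqnd,\land}$ and $\lor$ the binary join furnished by the join semilattice $\ty{A,\leqr,\lor}$. Each of these operations is idempotent, commutative and associative — the routine verification that the infima (resp. suprema) of one- and two-element subsets assemble into a semilattice operation — so $\ty{A,\land}$ and $\ty{A,\lor}$ are semilattices and $\ty{A,\land,\lor}$ is a bisemilattice; it is distributive precisely because the two displayed identities are assumed. This produces an object of $\mc{DBS}$.

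Finally I would check that the two passages are inverse to each other. Starting from $\alga\in\mc{DBS}$, forming $\ty{A,\leqnd,\leqr}$ and then recovering the operations gives back $\land$ and $\lor$, because in any semilattice the meet (resp. join) induced by the order $a\leqnd b\iff a\land b=a$ coincides with the original operation. Conversely, starting from a bi-relational system, forming $\ty{A,\land,\lor}$ and then reading off the orders via $a\land b=a$ and $a\lor b=b$ returns exactly $\leqnd$ and $\leqr$, since in a meet semilattice $a$ lies below $b$ iff $a$ is the infimum of $\{a,b\}$, and dually in a join semilattice. I do not expect any genuine obstacle: the whole argument is an exercise in translating between a semilattice presented equationally and the same semilattice presented as an ordered set. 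The only point deserving care is to make sure the distributivity laws are transported \emph{verbatim}, which works because in both presentations $\land$ and $\lor$ name the same two operations, so no reformulation of those identities is needed.
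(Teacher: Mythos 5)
Your argument is correct and complete: it is precisely the standard translation between the equational and order-theoretic presentations of semilattices, checked in both directions together with the verification that the two passages are mutually inverse. The paper itself offers no proof of this theorem (it is stated as folklore), and your write-up supplies exactly the routine argument the paper takes for granted, so there is nothing to compare beyond noting that you have filled in the omitted details correctly.
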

For our discourse, a relevant example of a distributive bisemilattice (in fact, it generates $\mc{DBS}$ as a variety) is the algebra that arises from the bisemilattice reduct of the tables of the binary \emph{weak propositional connectives}, discussed by Kleene in his ``Introduction to Metamathematics'' \cite{Kl52} (for further detail on the logical aspects, we refer the reader to section \ref{sec:motiv}).
\begin{example}\label{exmp:3red}{\rm
Consider the set $\{0,\frac12,1\}$ and define the following operations:
{
 \begin{align*}
&\qquad \begin{tabular}{c|ccc}
$\land$&$\mb0$&$\mb{1/2}$&$\mb1$\\
\hline$\mb0$&$0$&$1/2$&$0$\\
$\mb{1/2}$&$1/2$&$1/2$&$1/2$\\
$\mb{1}$&$0$&$1/2$&$1$
\end{tabular}
&\qquad \begin{tabular}{c|ccc}
$\lor$&$\mb0$&$\mb{1/2}$&$\mb1$\\
\hline$\mb0$&$0$&$1/2$&$1$\\
$\mb{1/2}$&$1/2$&$1/2$&$1/2$\\
$\mb{1}$&$1$&$1/2$&$1$
\end{tabular}
\end{align*}
}
It is not difficult to verify that this algebra is a distributive bisemilattice, although it is not a lattice, as shown by the Hasse diagrams of $\leq_{\land}$ and $\leq_{\lor}$:

 \begin{eqnarray*}
\leq_{\land}  \xymatrix{
 1 \ar@{-}[d]&\\
0 \ar@{-}[d]&\\  
 \frac12 \ar@{-}&}
&\leq_{\lor}  \xymatrix{
\frac 12 \ar@{-}[d]&\\
1 \ar@{-}[d]&\\  
 0 \ar@{-}&}
 \end{eqnarray*}

In passing, let us observe that the algebra in this example is the P\l onka sum of the 2-element lattice and the one-element lattice.}
\end{example}

For the development of the ideas in this article, the notions of filter and ideal will be essential (see \cite{Ba70}).
\begin{definition}\label{def:fil.id}
A \emph{filter} in a distributive bisemilattice $\mb A$ is a non-void set $F\subseteq A$ such that, for $a,b\in A$:
\begin{enumerate}[(i)]
\item if $a\leq_{\land}b$ and $a\in F$, then $b\in F$;
\item if $a,\,b\in F$, then $a\land b\in F$.
\end{enumerate}
A filter $F\subsetneq A$ is \emph{prime} if, whenever $a\lor b\in F$, then either $a\in F$ or $b\in F$.
\end{definition}
The concepts of \emph{ideal} and \emph{prime ideal} are defined dually: $\leqnd,\,\land$ and $\lor$ are replaced by $\leqr,\,\lor$ and $\land$, respectively. If $\mb A$ is a distributive bisemilattice, we denote by $\mc F(A)$ ($\mc{F}p(A)$) and $\mc I(A)$ ($\mc{I}p(A)$) , the sets of (prime) filters and (prime) ideals of $\mb A$, respectively. The following result on filters, and its dual form for ideals, to be credited to R. Balbes \cite[Lemma 3]{Ba70}, will be extensively used.

\begin{theorem}\label{thm:fil-sep}
If $\mb A$ is a distributive bisemilattice, and, for $a,\,b\in A$, $a\not\leqnd b$, then there is a prime filter $F$ such that $a\in F$ and $b\notin F$.
\end{theorem}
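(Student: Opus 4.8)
The plan is to prove Theorem~\ref{thm:fil-sep} by a standard Zorn's-lemma argument, suitably adapted to the bisemilattice setting where $\leqnd$ and $\leqr$ need not be related. First I would fix $a, b \in A$ with $a \not\leqnd b$, and consider the principal $\land$-filter generated by $a$, namely ${\upr}_{\land} a = \{x \in A : a \leqnd x\}$; by conditions (i) and (ii) of Definition~\ref{def:fil.id} this is a filter, and by assumption $b \notin {\upr}_{\land} a$. Then I would look at the poset $\mathcal{P}$ of all filters $G$ with ${\upr}_{\land} a \subseteq G$ and $b \notin G$, ordered by inclusion. This is nonempty (it contains ${\upr}_{\land} a$), and the union of a chain of such filters is again a filter avoiding $b$ — the two filter closure conditions are visibly preserved under directed unions — so Zorn's Lemma yields a maximal element $F \in \mathcal{P}$.

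The heart of the argument is to show that this maximal $F$ is prime. Suppose not: then there exist $c, d \in A$ with $c \lor d \in F$ but $c \notin F$ and $d \notin F$. By maximality, the filter generated by $F \cup \{c\}$ must contain $b$, and likewise the filter generated by $F \cup \{d\}$ must contain $b$. The key subsidiary fact is an explicit description of the filter generated by a filter $F$ together with one extra element $c$: I expect it to be $\{x \in A : \exists f \in F,\ f \land c \leqnd x\}$, using condition (ii) to absorb $f$ and $c$ into a single meet and condition (i) to close upward. Granting this, we obtain $f_1, f_2 \in F$ with $f_1 \land c \leqnd b$ and $f_2 \land d \leqnd b$. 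Setting $f = f_1 \land f_2 \in F$, monotonicity of $\land$ gives $f \land c \leqnd b$ and $f \land d \leqnd b$, hence $(f \land c) \lor (f \land d) \leqnd b$ (the join of two elements below $b$ is below $b$, since $\leqnd$-joins... — more carefully, $b$ is a $\leqnd$-upper bound of both, and one must check $(f\land c)\lor(f\land d)$ is still $\leqnd b$, which follows because in a semilattice the join with respect to one order need not interact with the other, so here I must instead argue directly).

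This is exactly the step I expect to be the main obstacle: in a genuine lattice one would write $(f \land c) \lor (f \land d) = f \land (c \lor d)$ by distributivity and conclude, since $f \land (c\lor d) \in F$ would contradict nothing — wait, rather one argues $f \land (c \lor d) \leqnd b$ forces $b \in F$ because $f \land (c\lor d) \in F$. Let me restate: from $f \land c \leqnd b$ and $f \land d \leqnd b$ we cannot in general pass to the join in the $\leqnd$ order, because $\leqr$-join is not a $\leqnd$-join. The correct route is to use the \emph{distributive law} $f \land (c \lor d) \approx (f \land c) \lor (f \land d)$ and the fact that $f \land c \leqnd b$, $f \land d \leqnd b$ each give an inequality that, combined via the dual reasoning, must be transferred carefully — concretely, one shows $(f\land c)\lor(f\land d) \leqnd b$ by noting $f \land c \leqnd b$ implies $(f\land c)\lor(f\land d) \leqnd b \lor (f \land d) \leqnd b \lor b = b$ is \emph{false} in general; instead the genuinely available tool is that $x \leqnd b$ and $y \leqnd b$ do \emph{not} imply $x \lor y \leqnd b$. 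So the argument must be organized the other way: from $c \lor d \in F$ and $f \in F$ we get $f \land (c \lor d) \in F$; by distributivity $f \land (c\lor d) = (f\land c)\lor(f\land d)$; and now I would invoke that a filter is closed upward under $\leqnd$ together with a direct verification — this is where I would need the precise Balbes lemma bookkeeping, likely showing that $f \land (c\lor d) \leqnd b$ is what actually follows, delivering $b \in F$, the desired contradiction. Thus the crux is correctly chaining the distributive identity with the one-generator filter description so that the two hypotheses $f_i \land c \leqnd b$, $f_i \land d \leqnd b$ combine to force $f \land (c \lor d) \leqnd b$; once that is in hand, primeness of $F$, and hence the theorem, follows immediately.
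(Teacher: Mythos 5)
Your overall strategy (Zorn's lemma on filters containing $\{x\in A: a\leqnd x\}$ and avoiding $b$, then primality of a maximal such filter via the one-generator description of the filter generated by $F\cup\{c\}$) is the standard route, and it is essentially the argument behind the result the paper simply cites from Balbes \cite{Ba70}; the paper itself gives no proof. However, your write-up has a genuine gap at exactly the step you flag as the crux, and you end it with a guess rather than an argument: you assert that ``$x\leqnd b$ and $y\leqnd b$ do \emph{not} imply $x\lor y\leqnd b$'' and then hope that ``the precise Balbes lemma bookkeeping'' would somehow still give $f\land(c\lor d)\leqnd b$. In a distributive bisemilattice the implication you disown is in fact true, and it is precisely where distributivity enters: if $x\leqnd b$ and $y\leqnd b$, i.e.\ $b\land x=x$ and $b\land y=y$, then $b\land(x\lor y)=(b\land x)\lor(b\land y)=x\lor y$, so $x\lor y\leqnd b$. (It can fail in non-distributive bisemilattices, which is why your instinct rebelled, but distributivity of $\land$ over $\lor$ is assumed here.)

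With that one-line lemma the proof closes exactly as you set it up: from maximality you get $f_1\land c\leqnd b$ and $f_2\land d\leqnd b$, hence with $f=f_1\land f_2\in F$ both $f\land c\leqnd b$ and $f\land d\leqnd b$; the lemma gives $(f\land c)\lor(f\land d)\leqnd b$, and distributivity rewrites this element as $f\land(c\lor d)$, which lies in $F$ because $f\in F$ and $c\lor d\in F$; upward closure of $F$ under $\leqnd$ then forces $b\in F$, contradicting $F\in\mathcal P$. So nothing in your skeleton is wrong --- the filter-generation formula, the Zorn step, and the reduction to the two inequalities are all correct --- but as written the proof stops short of the conclusion and even denies the availability of the key fact, so it is incomplete until you supply the computation $b\land(x\lor y)=(b\land x)\lor(b\land y)$ showing the principal $\leqnd$-downset of $b$ is closed under $\lor$.
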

Sometimes, we will refer to Theorem \ref{thm:fil-sep} as \emph{the prime filter} (\emph{ideal}) \emph{separation theorem}.\\

Semilattices can be regarded as the subvariety (denoted by $\mc{SEM}$) of $\mc {DBS}$ defined by the identity 
\begin{equation}\label{eq:SEM}
 x \land y  \approx x\lor y.
\end{equation}

Of course, in $\mc {SEM}$, both distributive laws are trivially satisfied. In case the semilattice reducts $\langle A, \land\rangle$ and $\langle A, \lor\rangle$ are commutative idempotent monoids -- i.e. they admit a greatest and smallest element $1$ and $0$, respectively -- we include these constants in the type and term the distributive bisemilattice $\mb A=\langle A, \land,\lor, 0,1\rangle$ \emph{bounded}. It can be readily seen that \emph{bounded distributive bisemilattices} form a variety (denoted by $\mc{BDBS}$) naturally specified by the equations 
\begin{equation}\label{eq:bnd}
x\land1\approx x\text{, and }x\lor0\approx x.
\end{equation}
Let us note that an application of the general theory of regularised varieties shows that bounded distributive bisemilattices are P\l onka sums of bounded distributive lattices over a semilattice with a neutral element $e$. For additional information the interested reader may consult \cite{Pl84} and \cite{Pl92}.\\
Whenever no danger of confusion is possible, in this work, by \emph{bounded bisemilattices we will mean bounded distributive bisemilattices}.\\

In 1993, Finn and Grigolia \cite{FG93}, and independently Brzozowski, seven years later, in \cite{Br00},  considered, under the name of \emph{De Morgan bisemilattices}, an expansion of bounded bisemilattices by an \emph{involution operation} $'$ that fulfils De Morgan laws:
\begin{equation}\label{eq:DDBS}
x'\lor y'\approx (x\land y)'\text{, and }x'\land y'\approx (x\lor y)'.
\end{equation}
We remark that, in general, De Morgan bisemilattices, as introduced in \cite{FG93} and \cite{Br00}, need not be distributive, while throughout this article \emph{distributivity will be always assumed}. The variety of \emph{bounded distributive bisemilattices satisfying De Morgan laws} will be denoted by $\mc{DDBS}$. Furthermore, let us notice that in \cite{FG93} and \cite{Br00}, it is also required that both semilattice reducts possess largest and smallest elements, call them $1_{\vee}, 0_{\vee}, 1_{\wedge}, 0_{\wedge}$, with the additional requirements that $1_{\vee} = 1_{\wedge}$ and $0_{\vee} = 0_{\wedge}$. According with Brzozowski's terminology, a bisemilattice with this feature is called \emph{consistently bounded}. This condition is rather strong. In particular, as shown by Brzozowski, a finite bisemilattice satisfying these conditions is a bilattice, i.e. both semilattice reducts are in fact lattices (for an extensive account on bilattices we refer the reader to \cite{F91, Da13}). Let us observe that, even if both these lattices are distributive, the whole bisemilattice need not necessarily be distributive (see Example 2 in \cite{Br00}).\\
From now on, whenever no danger of confusion is impending, \emph{by De Morgan bisemilattices} we will mean \emph{distributive bisemilattices that satisfy the De Morgan laws}. We remark that, throughout our discourse, \emph{the structures in $\mc{DDBS}$ are, in general, not assumed to be consistently bounded}, but bounded.

Recently, J. Gil F\'erez and his coauthors (see \cite{GPBP}), in the attempt of identifying a candidate suitable to play the r\^ole of an algebraic counterpart of paraconsistent weak Kleene logic \cite[\S64]{Kl52},\footnote{For a wider account on Kleene's logics and three-valued semantics, we refer the reader to section \ref{sec:motiv}.} introduced the stronger notion of \emph{involutive bisemilattices}.
An equational base for this variety, which will be denoted by $\mathcal{IDBS}$, follows:
\begin{enumerate}[({I}1)]
\item $x\lor x\approx x$;
\item $x\lor y\approx y\lor x$;
\item $x\lor (y\lor z)\approx (x\lor y)\lor z$;
\item $x''\approx x$;
\item $x\land y\approx (x'\lor y')'$;
\item $x\land(x'\lor y)\approx x\land y$;
\item $0\lor x\approx x$;
\item $0\approx 1'$.
\end{enumerate}
This concise axiomatization yields rather strong properties, such as full De Morgan laws, distributivity and boundness. These algebras can be regarded as De Morgan bisemilattices with the additional requirement that $x\land(x'\lor y)\approx x\land y$. Furthermore, we observe that, by virtue of axioms (I5) and (I8), the operations $\land$ and $1$ are entirely determined by $\lor$, $'$, and $0$, and vice versa. None the less, the orders that $\land$ and $\lor$ induce may still differ (cf. Example \ref{exmp:3}). In general, algebras in $\mc {IDBS}$ are not supposed to be consistently bounded. For a counterexample, we refer the reader again to Example \ref{exmp:3}.\\
Since no danger of confusion will be impending, from section \ref{sec:top} on, by $\mc{DBS}$, $\mc{DDBS}$ and $\mc{IDBS}$ we will denote, with a slight abuse of language, the categories whose objects are distributive, bounded distributive De Morgan, and involutive bisemilattices, and {whose arrows are the relative (eventually constant-preserving) homomorphisms}.\\
\section{Why a three-valued semantics?}\label{sec:motiv}
{In this section, we present and elaborate on few concepts that will be relevant to the development of our discourse. To begin with, we present the following, crucial, example:
\begin{example}\label{exmp:3}{\rm
$\3$ is the involutive bisemilattice with universe $\{0,\frac12,1\}$, on which the following operations are defined:
{
 \begin{align*}
\begin{tabular}{c|c}
$'$&\\
\hline$\mb0$&$1$\\
$\mb{1/2}$&$1/2$\\
$\mb1$&$0$
\end{tabular}
&\qquad \begin{tabular}{c|ccc}
$\land$&$\mb0$&$\mb{1/2}$&$\mb1$\\
\hline$\mb0$&$0$&$1/2$&$0$\\
$\mb{1/2}$&$1/2$&$1/2$&$1/2$\\
$\mb{1}$&$0$&$1/2$&$1$
\end{tabular}
&\qquad \begin{tabular}{c|ccc}
$\lor$&$\mb0$&$\mb{1/2}$&$\mb1$\\
\hline$\mb0$&$0$&$1/2$&$1$\\
$\mb{1/2}$&$1/2$&$1/2$&$1/2$\\
$\mb{1}$&$1$&$1/2$&$1$
\end{tabular}
\end{align*}
}}
\end{example}

As we mentioned on page \pageref{exmp:3red}, the algebra $\mb 3$ in Example \ref{exmp:3} (which generates $\mc{IDBS}$ as a variety) arises from the matrices of the \emph{weak propositional connectives}, discussed by Kleene in \cite{Kl52}, in which he distinguishes between a \emph{weak} and \emph{strong} sense of propositional connectives.
These tables are devised to describe situations in which partially defined / unknown properties (weak / strong sense of the connectives, respectively) are present. Indeed, Kleene's main assumption \cite{Kl38} is that there are statements whose logical truth or falsity is either not defined, or not available in terms of accessible algorithms, or it is not essential. 
In the long tradition of the subject, several interpretations of the possible meaning of the third value have been proposed.
In fact, beside Kleene's, there are many other perspectives that call for the introduction of a three-valued semantics.\footnote{An extensive account on three-valued semantics lies outside of the scope of this article, in which we confine ourselves in mentioning only perspectives that are strictly relevant to our discourse.}
\emph{Vagueness} is one of them. In natural language, words, in particular those that are used to detail ordinary objects, are \emph{vague}: it is not clear whether they fall completely under the scope of a property or not. In fact, these properties have a ``fuzzy border''.\footnote{An interesting account on a three-valued semantics for vagueness, as distinct from partially defined / unknown, is in H. Kamp \cite{Ka75}, in which supervaluations are considered. Vagueness is also the inspiration of a broad field of research known, nowadays, as fuzzy logic(s), intersecting philosophy, classical mathematics, and engineering (for an introduction to the subject the reader may consult \cite{Ha97}).}
 For example:
\begin{center}
Italy is boot shaped,\\
France is exagonal. 
\end{center}

A detailed account on the attempts to apply three-valued logic to the problem of vagueness, and the difficulties that many-valued logics and degrees of truth faced is in Williamson's book \cite[sections 4.3-4.6]{Wi}.

Another chief reference for the interpretation of a third logical value traces back to Aristotle's reflections on future contingents. Indeed, if \emph{tertium non datur} were true, the sentence
\begin{center}
`There will be a sea battle tomorrow'
\end{center}
will be either true or false. And therefore the future event it describes will be determined. But this clashes with the fact that matters that rely on human decision are not determined. Thus the excluded middle must be false. Future contingent sentences were, usually, classified as undetermined, rather than true or false. Discussions on this type of themes can be found also in medieval philosophy, that abounds in arguments on whether God's foreknowledge is consistent with human free willing.
Examples of this sort are interesting because the necessity of identifying an alternative to what is true or false emerges from a philosophical riddle, rather than a trivial attempt to classify examples of sentences on the fuzzy border between truth and falsity.\footnote{The literature on this subject is immense. For a preliminary discussion, the reader may consult \cite{KK}.}

Finally, we may recall the \emph{presupposition failure problem}.\footnote{A stimulating, albeit dated report on presupposition failure is by J. Martin \cite{Mar87}. More recent references are Cobreros et al. \cite{CERV} and van Eijck \cite{vEij}.} Questions, very often, presuppose facts. Indeed, an issue may not arise in case certain presuppositions / assumptions are not true. In fact, both the meaningful assertion or denial of a proposition may rely on the assumption of further premises, and, if those premises are false, any trial of asserting or denying the original sentence becomes meaningless.

The literature on presupposition is extensive. Perhaps, the first account on the subject is in Frege's ``Sense and Reference'', but the first explicit use of presupposition as a motivation for a three-valued evaluation of sentences was an application of Bochvar's tables (see below) to presupposition by Smiley in his ``Sense without denotation'' \cite{Sm60}.

Formally, a statement $A$ presupposes a set of premises $\{B_{i}\}_{i\in I}$, if, in any world $w$, if $A$ is true or false in $w$, then, for all $i\in I$, $B_{i}$ is true in $w$. This idea assumes that the evaluations true and false do not exhaust all possible eventualities, and that the excluded middle law, in general, is false. Two standard examples follow. Each of them consists of three sentences, the first is an assertion and the second is its denial, and both presuppose the third sentence of the triple. If the third statement is false, neither of the first two is true. Actually, the question whether they are true or false does not arise.
\begin{center}
\begin{enumerate}
\item John is a bachelor;
\item John is not a bachelor; 
\item  John exists.
\end{enumerate}
\end{center}
\begin{center}
\begin{enumerate}
\item The king of France is bald;
\item The king of France is not bald;
\item The king of France exists.
\end{enumerate}
\end{center}

\smallskip

In Kleene's three-valued logics, uncertainty / lack of knowledge of the applicability of a property is denoted by $\frac12$.\footnote{In Kleene's original notation the truth values are denoted by $\mathfrak{f,t}$ and $\mathfrak u$, which stands for \emph{undefined} \cite[p.332]{Kl52}} The matrices of Kleene's strong connectives are the following:
 \begin{align*}
\begin{tabular}{c|c}
$'$&\\
\hline$\mb0$&$1$\\
$\mb{1/2}$&$1/2$\\
$\mb1$&$0$
\end{tabular}
&\qquad \begin{tabular}{c|ccc}
$\land$&$\mb0$&$\mb{1/2}$&$\mb1$\\
\hline$\mb0$&$0$&$0$&$0$\\
$\mb{1/2}$&$0$&$1/2$&$1/2$\\
$\mb{1}$&$0$&$1/2$&$1$
\end{tabular}
&\qquad \begin{tabular}{c|ccc}
$\lor$&$\mb0$&$\mb{1/2}$&$\mb1$\\
\hline$\mb0$&$0$&$1/2$&$1$\\
$\mb{1/2}$&$1/2$&$1/2$&$1$\\
$\mb{1}$&$1$&$1$&$1$
\end{tabular}
\end{align*}

The family of ``Kleene logics'' split into two main subfamilies, according to which set of connectives (weak or strong) a logic refers to. In fact, to each family of connectives correspond two multiple-valued logics. On the one hand, as regards the strong Kleene connectives, we have \emph{the strong Kleene logic $\mb K_{3}$} \cite{Kl52} and \emph{the logic of paradox $\mb P_{3}$} \cite{Pri78}.\footnote{One may count within this set of logics also Bochvar's external logic, which corresponds to $\mb K_{3}$ \cite{Bo38}.} On the other, as regards the weak Kleene connectives, we have \emph{Bochvar's internal three-valued logic $\mb B_{3}$} (also known as \emph{Kleene's weak three-valued logic}) \cite{Bo38} and \emph{the paraconsistent weak Kleene logic $\mb{PWK}$} \cite{Ha49, Prio57}. The difference between the two logics within the two families lies precisely in the way tautologies are conceived. In $\mb K_{3}$ and $\mb B_{3}$ only ``$1$'' is designated -- a formula of the logic is a tautology iff it takes value to a designated truth value -- while in $\mb P_{3}$ and $\mb {PWK}$ both ``$\frac12,\,1$'' are designated. 

Perhaps, a moment's reflection on the possible significance of the value $\frac12$ within the family of Kleene's logics could be useful here.
The literature on strong Kleene's logic is vast. Kripke \cite{Kr75} finds this a convenient way to deal with paradoxical statements like ``What I'm now saying is false'', and uses $\mb K_{3}$ evaluations to propose a theory of truth which contains its own truth predicate.
An accurate and very sensible account on $\mb K_{3}$ is suggested by K\"orner in \cite{Ko66}, where the author introduces the notion of \emph{inexact class} of a non-empty domain generated by a partial definition of a property as a \emph{three-valued characteristic function}. K\"orner's interpretation has found applications in the eighties and nineties in the theory of rough sets \cite{Pa91}, and in the approximation logic based on it \cite{Ra86}.
Quoting Kleene, in his strong logic the truth values $0,1,\frac12$ can be interpreted, respectively, as 
\begin{quotation}
{``true'', ``false'', ``unkown (or value immaterial)''. Here unknown is a category into which we can regard any proposition as falling, whose value we either do not know or choose for the moment to disregard; and it does not then exclude the other two possibilities true or false.[...] Suppose that, for a given $x$, we know $Q(x)$ to be true. Then, using $1,0,\frac12$ as  ``true'', ``false'', ``unknown'' [...] we can conclude [...][cf. the tables of the strong connectives] that $Q(x)\lor R(x)$ is true. \cite[p. 335]{Kl52}} 
\end{quotation}

Instead, in Priest's logic,\footnote{This logic was introduced for the first time by Asenjo and Tamburino in \cite{AT}.} passionately supported by G. Priest in the framework of a dialetheic approach to the truth-theoretical and set-theoretical paradoxes, $\frac12$ can be interpreted as being ``overdetermined'', being \emph{both true and false}.\footnote{In Priest's article $1,0,\frac12$ are denoted by $t,f,p$, respectively \cite{Pri78}.}
\begin{quotation}
Classical logic errs in assuming that no sentence can be both true and false.We wish to correct this assumption. If a sentence is both true and false, let us call it ``paradoxical'' ($p$).[...] I will do just a couple of examples. If $A$ is $t$ and $B$ is $p$, then both $A$ and $B$ are true. Hence $A\land B$ is true. However, since $B$ is false, $A\land B$ is false. Thus $A\land B$ is paradoxical. If $A$ is $f$ and $B$ is $p$, then both $A$ and $B$ are false. Hence $A \land B$ is false. If $A \land B$ were true as well, then both $A$ and $B$ would be true, but $A$ is false only. Hence $A \land B$ is false.[...] Reasoning in a similar way we can justify the table of disjunction. \cite[pp. 226-227]{Pri78}
\end{quotation}
As regards Bochvar's internal logic \cite{Bo38} (for a detailed overview on Bochvar's internal/external logics, the reader may consult Malinowski's survey on many-valued logics \cite{Mal07}), the matrices of its connectives are devised following a principle similar to Kleene's standpoint: 
\begin{quotation}
every compound proposition including at least one meaningless component is meaningless, in other cases its value is determined classically. 
\end{quotation}
 As mentioned earlier, Bochvar considers only $1$ as designated value for his internal logic. It follows from this assumption that this logic has no tautologies. Indeed, a moment's reflection shows that, for any formula $A$, if all its propositional variables receive value $\frac12$, then $A$ itself will receive value $\frac12$. Ideas similar to Bochvar's have been pursued by several authors whose aim was providing logics that could express the notions of vagueness or non-sense. Among these attempts, it is perhaps worth mentioning Halld\'en's \emph{Non-sense logic} \cite{Ha49} (of which $\mb{PWK}$ is a linguistic reduct), whose matrices are the same as Bochvar's, but differ from internal logic in two main points: first it considers both $1$ and $\frac12$ as designated values, and, second, its language is enriched by a new unary connective $+$ which behaves as a sort of ``clarifier'' that expresses the meaningfulness of a proposition. In fact, for a formula $A$, $+A$ takes value $0$, if $A$ is meaningless; $1$ otherwise. Successive further elaborations on Non-sense logic are by \r Aqvist \cite{Aq}, Segerberg \cite{Sege}, and Omori, who, recently, tried to connect Halld\'en's Non-sense logic to the \emph{Logic of Formal Inconsistency} \cite{Om}.

Finally, we close this section by observing that if the matrices of strong Kleene's connectives define a distributive lattice order, the matrices of weak Kleene's connectives do not: they form a \emph{bounded distributive De Morgan bisemilattice}. 
}

\section{Balbes-type representation theorems}\label{sec:rep}
In 1970, in his article ``A representation theorem for distributive quasi-lattices'' \cite{Ba70}, R. Balbes provided a representation theorem for distributive bisemilattices which is analogous to the representation of distributive lattices as rings of sets. {In order to prove a duality between distributive bisemilattices (and expansions thereof)
and appropriate classes of topological spaces,} in this section we take full advantage of Balbes' theorem and widen its scope to encompass the cases of several expansions of distributive bisemilattices. Namely, De Morgan bisemilattices, bounded bisemilattices, and involutive bisemilattices. For readability sake, we will sketch, whenever it is insightful, up-to-date versions of Balbes' original arguments. We believe that this will be of some utility for the development of our discourse because Balbes' original result, which focuses on prime filters (ideals) only, \emph{works well also for of filters} (\emph{ideals}) \emph{in general}.\\

Our first move will be a set theoretical representation theorem for De Morgan bisemilattices, and then we will discuss how to adapt this representation to embrace the concept of involutive bisemilattices \cite{GPBP}, and bounded bisemilattices.\\
Consider two families of sets $X$ and $Y$ closed with respect to finite intersections and unions, respectively, such that $Y$ has a minimal element $\overline 0 $. Let $\theta: X\to Y$ be a bijective correspondence satisfying \\
{\small
\begin{equation}
\forall A,B,C\in X:\; A\cap\theta^{-1}(\theta(B)\cup\theta(C)) =\theta^{-1}(\theta(A\cap B)\cup\theta(A\cap C));\label{eq:bal1}
\end{equation}
\begin{equation}
\forall P,Q,R\in Y:\; P\cup\theta(\theta^{-1}(Q)\cap\theta^{-1}(R)) =\theta(\theta^{-1}(P\cup Q)\cap\theta^{-1}(P\cup Q));\label{eq:un}
\end{equation}
}
and let $\cmp:Y\to X$ be an order-dual isomorphism such that 
\[
^\star\circ\theta=\theta^{-1}\circ {^{\star}}^{-1}.
\]
Define, for $A,B\in X$, the following operations:
\begin{eqnarray*}\label{operations}
A+B&=&\theta^{-1}(\theta(A)\cup\theta(B));\\
A\cdot B&=&A\cap B;\\
A\dgr&=&(\theta{(A)})\cmp;\\
\bot&=&\theta^{-1}(\overline0);\\
\top&=&\overline0^\star.
\end{eqnarray*}

\begin{lemma}\label{lem:suggerito1}
The structure $\mb X=\langle X, \cdot,+,\dgr, \bot, \top\rangle$ is in $\mc{DDBS}$.
\end{lemma}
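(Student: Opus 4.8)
The plan is to verify the defining identities of $\mc{DDBS}$ directly on the structure $\mb X=\langle X,\cdot,+,\dgr,\bot,\top\rangle$, exploiting the fact that one of the two semilattice reducts, namely $\langle X,\cdot\rangle=\langle X,\cap\rangle$, is already a genuine meet-semilattice of sets, and transporting the join structure on $X$ back and forth through the bijection $\theta$ from the union-semilattice $\langle Y,\cup\rangle$. First I would record the easy half: since $X$ is closed under finite intersections, $\langle X,\cdot\rangle$ is a semilattice (associative, commutative, idempotent), and $\top=\overline 0\cmp$ is its top element because $\overline 0$ is the minimum of $Y$ and $\cmp$ is an order-dual isomorphism onto $X$; hence $A\cdot\top=A\cap\top=A$, giving the bounded law $x\land 1\approx x$. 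Dually, $\langle X,+\rangle$ is a semilattice: idempotence and commutativity of $+$ are immediate from those of $\cup$ together with injectivity of $\theta$, and associativity follows by applying $\theta$ to both sides and using that $\theta$ is a bijection. For the constant $\bot=\theta^{-1}(\overline 0)$ we have $A+\bot=\theta^{-1}(\theta(A)\cup\overline 0)=\theta^{-1}(\theta(A))=A$, which is $x\lor 0\approx x$.

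Next I would treat distributivity. One distributive law is essentially hypothesis~\eqref{eq:bal1}: unwinding the definitions, $A\cdot(B+C)=A\cap\theta^{-1}(\theta(B)\cup\theta(C))$ and $(A\cdot B)+(A\cdot C)=\theta^{-1}\bigl(\theta(A\cap B)\cup\theta(A\cap C)\bigr)$, so \eqref{eq:bal1} says exactly $A\cdot(B+C)=(A\cdot B)+(A\cdot C)$. For the other distributive law $x\lor(y\land z)\approx(x\lor y)\land(x\lor z)$ I would push the identity through $\theta$: applying $\theta$ to the left side gives $\theta(A)\cup\theta(B\cap C)$, and to the right side gives $\theta\bigl(\theta^{-1}(\theta(A)\cup\theta(B))\cap\theta^{-1}(\theta(A)\cup\theta(C))\bigr)$; after the harmless relabelling $P=\theta(A)$, $Q=\theta(B)$, $R=\theta(C)$ this is precisely the content of \eqref{eq:un} (modulo the evident typo there, where the second factor should read $\theta^{-1}(P\cup R)$). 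Since $\theta$ is injective, equality after applying $\theta$ yields equality, so both distributive laws hold and $\mb X$ is a distributive bisemilattice.

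It remains to check the De Morgan laws \eqref{eq:DDBS}, $x'\lor y'\approx(x\land y)'$ and $x'\land y'\approx(x\lor y)'$, where $A\dgr=(\theta(A))\cmp$. Here the key tool is the compatibility relation $\cmp\circ\,\theta=\theta^{-1}\circ(\cmp)^{-1}$, equivalently $\theta\circ\cmp\circ\,\theta=(\cmp)^{-1}$, which lets me move $\dgr$ past $\theta$. For the first law, $(A\cdot B)\dgr=(\theta(A\cap B))\cmp$; using that $\cmp$ is an order-dual isomorphism from $\langle Y,\cup,\overline 0\rangle$ onto $\langle X,\cap\rangle$ it sends unions to intersections, and combined with \eqref{eq:bal1} (written additively) and the relation between $\cmp$ and $\theta$ one should obtain $A\dgr+B\dgr$. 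The second De Morgan law is handled symmetrically, now using that $(\cmp)^{-1}\colon X\to Y$ is order-dual, hence carries $\cap$ to $\cup$. I also need $\dgr$ to be well defined as a map $X\to X$ and to be an involution if any identity below required it, but \eqref{eq:DDBS} alone does not demand $\dgr\dgr=\mathrm{id}$, so for this lemma it suffices that $(\theta(A))\cmp\in X$, which holds since $\cmp\colon Y\to X$.

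The main obstacle I anticipate is purely bookkeeping: keeping straight the four maps $\theta,\theta^{-1},\cmp,(\cmp)^{-1}$ and the directions in which each is order-preserving or order-reversing, and using the single relation $\cmp\circ\theta=\theta^{-1}\circ(\cmp)^{-1}$ at exactly the right moment so that the De Morgan computations close up. There is no deep difficulty — associativity of $+$, both distributive laws, and both De Morgan laws each reduce, after transporting along $\theta$ and/or $\cmp$ and invoking injectivity, to the given hypotheses \eqref{eq:bal1} and \eqref{eq:un} together with elementary facts about $\cap$, $\cup$, and order-dual isomorphisms; one only has to be careful about the typo in \eqref{eq:un} and to note that the argument never uses the hypothesized minimality of $\overline 0$ beyond what is needed for $\bot,\top$ to behave as the bounds.
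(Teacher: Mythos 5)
Your overall route coincides with the paper's: the paper simply cites Balbes' Theorem 4 for the fact that $\langle X,+,\cdot\rangle$ is a distributive bisemilattice (you rederive this directly from \eqref{eq:bal1} and \eqref{eq:un}, and your reading of the misprint in \eqref{eq:un} --- second factor $\theta^{-1}(P\cup R)$ --- is the intended one), and it then checks the bounds and the De Morgan laws exactly along the lines you indicate. One inaccuracy in your sketch: the De Morgan computations do not use \eqref{eq:bal1} at all. For $(A+B)\dgr=A\dgr\cdot B\dgr$ one only needs $\theta\theta^{-1}=\mathrm{id}$ on $Y$ and the fact that $^{\star}$ turns unions into intersections; for $(A\cdot B)\dgr=A\dgr+B\dgr$ one uses $(A\cap B)^{\star^{-1}}=A^{\star^{-1}}\cup B^{\star^{-1}}$ together with the consequence $\theta((\theta(A))^{\star})=A^{\star^{-1}}$ of the relation $^{\star}\circ\theta=\theta^{-1}\circ{^{\star}}^{-1}$, which rewrites $\theta^{-1}(A^{\star^{-1}}\cup B^{\star^{-1}})$ as $A\dgr+B\dgr$. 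So the tools you name (order-duality of $^{\star}$ and the compatibility relation) do close the computation, but the appeal to \eqref{eq:bal1} ``written additively'' is a red herring.

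The genuine gap is your explicit decision not to verify that $\dgr$ is an involution. In this paper a De Morgan bisemilattice is a bounded distributive bisemilattice expanded by an \emph{involution} operation satisfying \eqref{eq:DDBS}; the identity $x''\approx x$ is part of the definition of $\mc{DDBS}$, not an optional extra (it is also what makes $\mc{IDBS}$, whose axiom (I4) is $x''\approx x$, a strengthening of $\mc{DDBS}$, and it is used later, e.g.\ in the proof of Theorem \ref{thm: rpr}, where bijectivity of $^{\star}$ is deduced from $'$ being an involution). The paper's proof checks this point, and so must you; fortunately it is one line from the compatibility relation: $\theta(A\dgr)=\theta((\theta(A))^{\star})=A^{\star^{-1}}$, hence $(A\dgr)\dgr=(\theta(A\dgr))^{\star}=(A^{\star^{-1}})^{\star}=A$. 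With that line added, your argument is complete and matches the paper's.
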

\begin{proof}
Following \cite[Thm. 4]{Ba70}, one obtains that $\mb X=\langle X, +,\cdot\rangle$ is a distributive bisemilattice. Since $^\star\circ\theta=\theta^{-1}\circ {^\star}^{-1}$, it can be readily verified that $\dgr$ is an involution.

Moreover, from the fact that $^{\star}$ is an order dual isomorphism, and because $\bot\dgr=(\theta^{-1}(\overline0))\dgr=(\theta\theta^{-1}(\overline0))^{\star}=\overline0^{\star}=X=\top$, we obtain that $A\cdot \top=A\cap\overline0^{\star}=A\cap X=A$, and $A+\bot=\theta^{-1}(\theta(A)\cup\theta\theta^{-1}(\overline0))=\theta^{-1}(\theta(A)\cup\overline0)=\theta^{-1}(\theta(A))=A$.

As regards De Morgan's laws: $(A+B)\dgr=(\theta^{-1}(\theta(A)\cup\theta(B)))\dgr=(\theta\theta^{-1}(\theta(A)\cup\theta(B)))^{\star}=(\theta(A))^{\star}\cap(\theta(B))^{\star}=(\theta(A))^{\star}\cdot(\theta(B))^{\star}=A\dgr\cdot B\dgr$. Upon noticing that the fact that $^{\star}$ is an order dual isomorphism implies, for all $A^{\star^{-1}}, B^{\star^{-1}}\in Y$, that: $(A^{\star^{-1}}\cup B^{\star^{-1}})=(A^{\star^{-1}}\cup B^{\star^{-1}})^{\star\star^{-1}}=(A^{\star^{-1}\star}\cap B^{\star^{-1}\star})^{\star^{-1}}=(A\cap B)^{\star^{-1}}$, and the requirement that $^\star\circ\theta=\theta^{-1}\circ {^\star}^{-1}$ implies that $\theta\circ^\star\circ\theta=\theta\circ\theta^{-1}\circ {^\star}^{-1}= {^\star}^{-1}$, we obtain that 
\begin{eqnarray*}
(A\cdot B)\dgr&=&(\theta(A\cdot B))^{\star}\\
&=&(\theta(A\cap B))^{\star}\\
&=&\theta^{-1}((A\cap B)^{{\star}^{-1}})\\
&=&\theta^{-1}(A^{{\star}^{-1}}\cup B^{{\star}^{-1}})\\
&=&\theta^{-1}\big(\theta((\theta(A))^{\star})\cup \theta((\theta(B))^{\star})\big)\\
&=&(A\dgr+B\dgr).
\end{eqnarray*}
\end{proof}
We will refer to a structure of the form $\mb X=\langle X, \cdot,+,\dgr, \bot, \top\rangle$ as a \emph{{DDBS} of sets.}
On the other hand, suppose that $\mathbf L$ is in $\mc{DDBS}$, and consider $\mathcal {F}(L),\mathcal {I}(L)$, the sets of filters and ideals of $\mathbf L$, respectively (cf. page \pageref{def:fil.id}). Define for $x\in L$:
\begin{eqnarray*}\label{upr-dnr-arrows}
\mathord{\uparrow} x=&\{F\in\mathcal {F}(L):x\in F\}; \\
\mathord\downarrow x=&\{I\in\mathcal {I}(L):x\notin I\}.
\end{eqnarray*}
Moreover, set 
\begin{eqnarray*}
X=&\{\mathord\uparrow x:x\in L\}; \\
Y=&\{\mathord\downarrow x:x\in L\}.
\end{eqnarray*}
and let $\theta:\mathord\uparrow x\mapsto \mathord\downarrow x$, $\theta^{-1}: \mathord\downarrow x\mapsto\mathord\uparrow x$, $^{\star}:\dnr x\mapsto\upr x'$.
\begin{theorem}\label{thm: rpr}
Let $\mathbf L=\langle L, \land ,\lor , ',0,1\rangle$ in $\mc{DDBS}$, then the structure $\mb X=\langle X, \cdot,+,\dgr, \bot, \top\rangle$ is a DDBS of sets isomorphic to $\mb L$.
\end{theorem}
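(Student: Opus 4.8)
The plan is to deduce the theorem from Lemma~\ref{lem:suggerito1} together with a direct verification that $\varphi\colon x\mapsto\upr x$ is an isomorphism $\mb L\to\mb X$. Everything hinges on two ``pointwise'' identities: for all $x,y\in L$,
\[
\upr x\cap\upr y=\upr(x\land y)\quad\text{and}\quad\dnr x\cup\dnr y=\dnr(x\lor y).
\]
The first follows from Definition~\ref{def:fil.id}: clause (ii) gives ``$\subseteq$'', and clause (i) gives ``$\supseteq$'' because $x\land y\leqnd x$ and $x\land y\leqnd y$; the second is the order-dual computation with the ideal axioms and the facts $x\leqr x\lor y$, $y\leqr x\lor y$. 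In particular $X$ is closed under finite intersections and $Y$ under finite unions. Next, $x\mapsto\upr x$ is injective: the principal set $\{z\in L:x\leqnd z\}$ is a filter containing $x$, so $\upr x=\upr y$ forces $x\leqnd y$, and symmetrically $y\leqnd x$, whence $x=y$; a dual argument (now using that the principal ideal generated by $y$ is \emph{not} a member of $\dnr y$) shows $x\mapsto\dnr x$ is injective. Hence $\theta$, $\theta^{-1}$ and $\cmp$ are well defined, $\theta$ is a bijection of $X$ onto $Y$, and the least element of $Y$ is $\overline 0=\dnr 0=\emptyset$, since $0\leqr a$ for every $a\in L$ forces $0$ into every ideal.

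Next I would check that the data $(X,Y,\theta,\cmp)$ satisfy the hypotheses of Lemma~\ref{lem:suggerito1}. Plugging $\upr a,\upr b,\upr c$ into \eqref{eq:bal1} and simplifying with the two pointwise identities turns its left-hand side into $\upr\bigl(a\land(b\lor c)\bigr)$ and its right-hand side into $\upr\bigl((a\land b)\lor(a\land c)\bigr)$, so \eqref{eq:bal1} reduces to the first distributive law of $\mb L$ and holds because $\upr$ is single-valued; likewise \eqref{eq:un} (after the evident correction of its right-hand side) is the second distributive law. Since $'$ is an involution on $L$, the map $\cmp\colon\dnr x\mapsto\upr x'$ is a bijection $Y\to X$, and it reverses inclusion: using the order-embedding properties $\upr a\subseteq\upr b\iff a\leqnd b$ and $\dnr a\subseteq\dnr b\iff a\leqr b$ together with De~Morgan, one gets $\dnr x\subseteq\dnr y\iff x\leqr y\iff y'\leqnd x'\iff\upr y'\subseteq\upr x'$. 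Finally $\cmp\circ\theta=\theta^{-1}\circ{\cmp}^{-1}$ is checked on a generic $\upr x$: the left side is $\cmp(\dnr x)=\upr x'$, and, since ${\cmp}^{-1}(\upr w)=\dnr w'$, the right side is $\theta^{-1}(\dnr x')=\upr x'$. By Lemma~\ref{lem:suggerito1}, $\mb X=\langle X,\cdot,+,\dgr,\bot,\top\rangle$ is a DDBS of sets.

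To finish, I would show $\varphi\colon x\mapsto\upr x$ is an isomorphism onto $\mb X$. It is a bijection by the injectivity above and the definition of $X$. On operations: $\varphi(x\land y)=\upr x\cap\upr y=\varphi(x)\cdot\varphi(y)$; $\varphi(x\lor y)=\theta^{-1}\bigl(\dnr(x\lor y)\bigr)=\theta^{-1}\bigl(\theta(\upr x)\cup\theta(\upr y)\bigr)=\varphi(x)+\varphi(y)$; $\varphi(x')=\upr x'=\cmp(\dnr x)=(\theta(\upr x))\cmp=\varphi(x)\dgr$; and $\varphi(0)=\upr 0=\theta^{-1}(\overline 0)=\bot$. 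The only clause needing an extra step is $\varphi(1)=\top$: here $\varphi(1)=\upr 1$ while $\top=\overline 0^{\star}=\cmp(\dnr 0)=\upr 0'$, so the claim is $0'=1$; this holds because applying $'$ to $1'\lor x$ yields $1\land x'=x'$, so $1'\lor x=x$ for all $x$, i.e.\ $1'$ is a $\lor$-neutral element, whence $1'=0$ by uniqueness of the identity in the semilattice $\langle L,\lor\rangle$, and therefore $0'=1$.

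The genuinely delicate point is bookkeeping rather than anything conceptual: one must keep the ideal side oriented correctly in $\leqr$ so that both $\dnr x\cup\dnr y=\dnr(x\lor y)$ and $\dnr x\subseteq\dnr y\iff x\leqr y$ come out right, and one must not confuse $\theta,\theta^{-1},\cmp,{\cmp}^{-1}$ in the compatibility identity. Balbes' theorem \cite[Thm.~4]{Ba70} already supplies the underlying distributive bisemilattice, so the new content is confined to the behaviour of $'$, $0$, and $1$.
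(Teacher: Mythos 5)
Your proof is correct and follows essentially the same route as the paper's: establish the pointwise identities $\upr(x\land y)=\upr x\cap\upr y$ and $\dnr(x\lor y)=\dnr x\cup\dnr y$, reduce conditions \eqref{eq:bal1}, \eqref{eq:un} and the required properties of $^{\star}$ to the distributive and De Morgan laws of $\mb L$, and conclude via Lemma \ref{lem:suggerito1} that $x\mapsto\upr x$ is an isomorphism onto a DDBS of sets. Your only deviations are minor and harmless: you get injectivity from principal filters/ideals instead of invoking Theorem \ref{thm:fil-sep}, and you add the explicit check that $0'=1$ for preservation of $\top$, a point the paper leaves tacit.
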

\begin{proof}
 The fact that $\langle X,\cap, \mathord\uparrow1\rangle$ is a meet semilattice with greatest element $\mathord\uparrow1$ is immediate. 
By Theorem \ref{thm:fil-sep} and its dual version for ideals, the mappings $\theta:\mathord\uparrow x\mapsto \mathord\downarrow x$ and $\theta^{-1}: \mathord\downarrow x\mapsto\mathord\uparrow x$ are one-to-one, and a moment's reflection shows that they are mutually inverse correspondences between $X$ and $Y$.

If $H\in \mathord\downarrow x\cup\mathord\downarrow y$, then, without loss of generality, $x\notin H$, and since $x\leqr x\lor y$, $x\lor y\notin H$, i.e. $H\in \dnr (x\lor y)$. Conversely, suppose $H\notin \mathord\downarrow x\cup\mathord\downarrow y$. Then, $x,\,y\in H$, and so $x\lor y\in H$, and consequently $H\notin\mathord\downarrow(x\lor y)$. That is to say, $\mathord\downarrow(x\lor y)=\mathord\downarrow x\cup\mathord\downarrow y$, and so $Y$ is closed under $\cup$. Finally, it is straightforward to observe that $\mathord\downarrow0$ is the least element of the semilattice $\langle Y,\cup, \mathord\downarrow0\rangle$.

Let us note that the assignment defined for $x\in L$: $x\mapsto \upr x$, preserves finite meets. In fact, it is clear that $\mathord\uparrow(x\land  y)\subseteq\mathord\uparrow(x)\cdot \mathord\uparrow(y)=\mathord\uparrow(x)\cap \mathord\uparrow(y)$, because $(x\land  y)\land  x=y\land( x\land  x)=x\land  y$, and analogously interchanging $x$ with $y$. So $x\land y\leqnd x,y$, and therefore, by Definition \ref{def:fil.id}-(i), if $F\in \mathord\uparrow(x\land y)$, then $F$ belongs to both $ \mathord\uparrow x$ and $ \mathord\uparrow y$.
Conversely, in case $H\in\mathord\uparrow(x)\cdot \mathord\uparrow(y)=\{F\in \mc F(L):x\in F\}\cap\{G\in \mc F(L):y\in G\}$, then, by Definition \ref{def:fil.id}-(ii), from the fact that $x,y\in H$, it follows that $x\land  y\in F$, and therefore our claim follows. As regards $\lor$, we have that $\upr(x\lor y)=\upr\dnr(x\lor y)=\upr(\dnr x\cup\dnr y)=\upr(\dnr \upr x\cup\dnr \upr y)=\upr x+\upr y$.
Furthermore, it can be shown that the assignment $x\mapsto \mathord\uparrow x$ is an isomorphism between the reducts $\langle \land ,\lor \rangle$ and $\langle +,\cdot\rangle$. In fact, injectivity is warranted by the prime filter separation (see Theorem \ref{thm:fil-sep}), i.e. for distinct $x,\,y\in L$, there is a prime filter $F$ of $\mb L$ such that $x\in F$ but $y\notin F$, and surjectivity is obvious. Also, we have already seen that $\mathord\uparrow x\cdot \mathord\uparrow y=\mathord\uparrow(x\land  y)$. Moreover, $\mathord\uparrow x+ \mathord\uparrow y=\mathord\uparrow(\mathord\downarrow\mathord\uparrow x\cup \mathord\downarrow\mathord\uparrow y)=\mathord\uparrow(\mathord\downarrow x\cup \mathord\downarrow y)=\mathord\uparrow \mathord\downarrow(x\lor   y)=\mathord\uparrow (x\lor   y)$. We now show that condition \eqref{eq:bal1} is satisfied:
\begin{eqnarray*}
\mathord\uparrow x\cap\theta^{-1}(\theta(\mathord\uparrow y)\cup\theta(\mathord\uparrow z))&=&\mathord\uparrow x\cap\mathord\uparrow(\mathord\downarrow y\cup\mathord\downarrow z)\\
&=&\mathord\uparrow x\cap\mathord\uparrow\mathord\downarrow ( y\lor z)\\
&=&\mathord\uparrow x\cap\mathord\uparrow( y\lor z)\\
&=&\mathord\uparrow x\cdot\mathord\uparrow( y\lor z)\\
&=&\mathord\uparrow( x\land( y\lor z))\\
&=&\mathord\uparrow( (x\land y)\lor (x\land z))\\
&=&\mathord\uparrow(\mathord\downarrow\mathord\uparrow(x\land y)\cup \mathord\downarrow\mathord\uparrow(x\land z))\\
&=&\mathord\uparrow(\mathord\downarrow(\mathord\uparrow x\cap\mathord\uparrow y)\cup \mathord\downarrow(\mathord\uparrow x\cap\mathord\uparrow z))\\
&=&\theta^{-1}(\theta(\mathord\uparrow x\cap\mathord\uparrow y)\cup \theta(\mathord\uparrow x\cap\mathord\uparrow z)).
\end{eqnarray*}
As regards condition \eqref{eq:un}, 
\begin{eqnarray*}
\mathord\downarrow x\cup\theta(\theta^{-1}(\dnr y)\cap\theta^{-1}(\dnr z))&=& \mathord\downarrow x\cup\mathord\downarrow(\mathord\uparrow \mathord\downarrow y\cap\mathord\uparrow\mathord\downarrow z)\\
&=&\mathord\downarrow x\cup\mathord\downarrow(\mathord\uparrow y\cap\mathord\uparrow z)\\
&=&\mathord\downarrow x\cup\mathord\downarrow(y\land z)\\
&=&\mathord\downarrow (x\lor(y\land z))\\
&=&\mathord\downarrow ((x\lor y)\land(x\lor z))\\
&=&\mathord\downarrow\mathord\uparrow ((x\lor y)\land(x\lor z))\\
&=&\mathord\downarrow (\mathord\uparrow(x\lor y)\cap\mathord\uparrow(x\lor z))\\
&=& \mathord\downarrow (\mathord\uparrow\mathord\downarrow(x\lor y)\cap\mathord\uparrow\mathord\downarrow(x\lor z))\\
&=&\mathord\downarrow (\mathord\uparrow(\mathord\downarrow x\cup\mathord\downarrow y)\cap\mathord\uparrow(\mathord\downarrow x\cup\mathord\downarrow z))\\
&=&\theta (\theta^{-1}(\mathord\downarrow x\cup\mathord\downarrow y)\cap\theta^{-1}(\mathord\downarrow x\cup\mathord\downarrow z)).
\end{eqnarray*}

Let us recall that, for $x\in L$, $(\dnr x)^{\star}=\{F\in \mathcal F:x'\in F\}=\upr x'$.

Since $'$ is an involution, it is not difficult to see that $^{\star}$ is a bijection between $Y$ and $X$. We now show that $^{\star}$ is an order dual mapping. Indeed, $(\dnr x\cup\dnr y)^{\star}=(\dnr(x\lor y))^{\star}=\upr((x\lor y)')=\upr(x'\land y')=\upr x'\cap\upr y'=(\dnr x)^{\star}\cap(\dnr y)^{\star}$. That $^{\star}\circ\theta=^{\star}\circ\dnr=\upr\circ{^{\star}}^{-1}=\theta^{-1}\circ{^{\star}}^{-1}$ follows from:
\begin{eqnarray*}
\left(\theta\left(\left(\theta\left(\upr x\right)\right)^{\star}\right)\right)^{\star}&=& \left(\dnr\left(\left(\dnr\left(\upr x\right)\right)^{\star}\right)\right)^{\star}\\
&=&\left(\dnr\left(\left(\dnr x\right)^{\star}\right)\right)^{\star}\\
&=&\left(\dnr\left(\upr x'\right)\right)^{\star}\\
&=&\left(\dnr x'\right)^{\star}\\
&=&\upr x''\\
&=&\upr x.
\end{eqnarray*}
Recall now that $(\mathord\uparrow x)\dgr=(\te(\upr x))^{\star}$.

Obviously, $\mathord\uparrow(x')=(\mathord\uparrow x)\dgr$. And therefore, by virtue of the reasoning above, we obtain that the bisemilattice $\mb L=\ty{L, \land, \lor, ',0,1}$ is isomorphic to the bisemilattice $\mb X=\ty{X, \cdot, +, \dgr, \bot, \top}$.
\end{proof}

We will now show that a rather mild addendum to the statements of the previous results will be adequate to comprehend the concept of involutive bisemilattice.
Consider $\theta, \, \theta^{-1}$ and $^{\star}$ as defined on page \pageref{eq:bal1}, and assume that the following condition is also satisfied:
\begin{equation}\label{eq:hey}
\forall A,B\in X:\; \theta(A\cap(\theta(A))\cmp)\subseteq\theta(A\cap B).
\end{equation}
Then, upon resorting the conventions on page \pageref{operations}, the following lemma holds:
\begin{lemma}\label{lem:suggerito1.2}
The structure $\mb X=\langle X, \cdot,+,\dgr, \bot, \top\rangle$ is in $\mc{IDBS}$.
\end{lemma}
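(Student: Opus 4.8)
The plan is to leverage Lemma~\ref{lem:suggerito1}: that result already shows $\mb X=\langle X,\cdot,+,\dgr,\bot,\top\rangle$ belongs to $\mc{DDBS}$, so the whole task reduces to verifying the single extra identity that distinguishes $\mc{IDBS}$ inside $\mc{DDBS}$. As recorded after the list (I1)--(I8), the identities (I1)--(I5), (I7) and (I8) all hold in any algebra of $\mc{DDBS}$: (I1)--(I3) are the join-semilattice axioms, (I4) is the involution law, (I5) is the De Morgan law $x'\lor y'\approx(x\land y)'$ rewritten via $x''\approx x$, (I7) is the boundedness law $x\lor 0\approx x$ together with commutativity of $\lor$, and (I8) follows from $x'\land y'\approx(x\lor y)'$ by setting $x=0$ (which forces $0'$ to be the $\leqnd$-greatest element, hence $0'=1$ and $0=1'$). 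Consequently the only identity left to establish for $\mb X$ is (I6), namely $x\land(x'\lor y)\approx x\land y$; unwinding the definitions on page~\pageref{operations}, this is exactly the statement that $A\cdot(A\dgr+B)=A\cdot B$ for all $A,B\in X$.

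The computation I would carry out is direct. Since $\cmp\colon Y\to X$, the element $A\dgr=(\theta(A))\cmp$ lies in $X$, and $X$ is closed under finite intersections, so every set occurring below is legitimate. Unfolding,
\[
A\cdot(A\dgr+B)=A\cap\theta^{-1}\bigl(\theta((\theta(A))\cmp)\cup\theta(B)\bigr).
\]
Applying condition~\eqref{eq:bal1} to the triple $\bigl(A,\,(\theta(A))\cmp,\,B\bigr)$ of members of $X$ turns the right-hand side into
\[
\theta^{-1}\bigl(\theta\bigl(A\cap(\theta(A))\cmp\bigr)\cup\theta(A\cap B)\bigr).
\]
Now the new hypothesis~\eqref{eq:hey} says precisely that $\theta\bigl(A\cap(\theta(A))\cmp\bigr)\subseteq\theta(A\cap B)$, so the union inside collapses to $\theta(A\cap B)$, and hence
\[
A\cdot(A\dgr+B)=\theta^{-1}\bigl(\theta(A\cap B)\bigr)=A\cap B=A\cdot B,
\]
which is (I6). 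Combining this with Lemma~\ref{lem:suggerito1} gives $\mb X\in\mc{IDBS}$.

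There is no serious obstacle here; the argument is essentially a one-step application of~\eqref{eq:hey} once the bookkeeping is in place. The two points that do deserve a line of care are (i) justifying that $(\theta(A))\cmp\in X$, so that~\eqref{eq:bal1} may legitimately be invoked with that element in its middle slot, and (ii) the reduction noted in the first paragraph --- that over $\mc{DDBS}$ the whole axiom set (I1)--(I8) boils down to the lone identity (I6) --- after which condition~\eqref{eq:hey} does all the remaining work.
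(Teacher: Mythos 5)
Your proposal is correct and follows essentially the same route as the paper: the paper's proof is exactly your one-step computation, namely $A\cdot(A\dgr+B)=A\cap\theta^{-1}(\theta(\theta(A)\cmp)\cup\theta(B))=\theta^{-1}(\theta(A\cap\theta(A)\cmp)\cup\theta(A\cap B))=\theta^{-1}(\theta(A\cap B))=A\cdot B$ via conditions \eqref{eq:bal1} and \eqref{eq:hey}, with the remaining axioms (I1)--(I5), (I7), (I8) left to Lemma \ref{lem:suggerito1} and the remark that $\mc{IDBS}$ is $\mc{DDBS}$ plus (I6). Your explicit justification of that reduction (in particular of (I5) and (I8)) and of $(\theta(A))\cmp\in X$ only spells out what the paper leaves implicit.
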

\begin{proof}
From conditions \eqref{eq:bal1} and \eqref{eq:hey}:
\begin{eqnarray*}
 A\cdot(A\dgr+B)&=&A\cap (A^{\dgr}+B)\\
 &=&A\cap \theta^{-1}(\theta(\theta(A)^{\star})\cup \theta(B))\\
 &=&\theta^{-1}(\theta(A\cap \theta(A)^{\star})\cup \theta(A\cap B))\\
 &=&\theta^{-1}(\theta(A\cap B))\\
 &=&A\cap B\\ 
 &=&A\cdot B.
\end{eqnarray*}\end{proof}
In the case of Lemma \ref{lem:suggerito1.2}, we will refer to a structure of the form $\mb X=\langle X, \cdot,+,\dgr, \bot, \top\rangle$ as an \emph{{IDBS} of sets.}
Upon recalling that, for an algebra $\mb L$ in $\mc{IDBS}$, $X=\{\mathord\uparrow x:x\in L\},\,
Y=\{\mathord\downarrow x:x\in L\}$, where $\upr x$ and $\dnr x$ are as in page \pageref{upr-dnr-arrows}, the following theorem holds: 
\begin{theorem}\label{thm: rpr1}
Let $\mathbf L=\langle L, \land ,\lor , ',0,1\rangle$ in $\mc{IDBS}$, then the structure $\mb X=\langle X, \cdot,+,\dgr, \bot, \top\rangle$ is an IDBS of sets isomorphic to $\mb L$. 
\end{theorem}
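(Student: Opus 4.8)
The plan is to deduce Theorem~\ref{thm: rpr1} from Theorem~\ref{thm: rpr} with essentially no new work beyond checking the single extra condition \eqref{eq:hey}. The first observation is that $\mc{IDBS}$ is a subvariety of $\mc{DDBS}$ in the shared signature $\ty{2,2,1,0,0}$: as noted right after axioms (I1)--(I8), these axioms already force the full De~Morgan laws, distributivity and boundedness, so every $\mb L\in\mc{IDBS}$ is in particular a bounded distributive De~Morgan bisemilattice. Hence Theorem~\ref{thm: rpr} applies to $\mb L$ as it stands: with $X=\{\upr x:x\in L\}$, $Y=\{\dnr x:x\in L\}$, and $\theta,\theta^{-1},\cmp$ as defined on page~\pageref{upr-dnr-arrows}, the proof of Theorem~\ref{thm: rpr} has already established that this concrete data satisfies the abstract requirements \eqref{eq:bal1}, \eqref{eq:un} and the conditions on the order-dual isomorphism $\cmp$, and that $x\mapsto\upr x$ is a bijection of $L$ onto $X$ sending $\land,\lor,{}',0,1$ to $\cdot,+,\dgr,\bot,\top$, i.e.\ an isomorphism of $\mb L$ onto $\mb X=\ty{X,\cdot,+,\dgr,\bot,\top}$. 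Therefore, granting that the concrete $\mb X$ additionally satisfies \eqref{eq:hey}, Lemma~\ref{lem:suggerito1.2} gives $\mb X\in\mc{IDBS}$ --- so $\mb X$ is an \emph{IDBS of sets} --- and the isomorphism above finishes the proof.

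So the only point to verify is \eqref{eq:hey} for $X=\{\upr x:x\in L\}$. Since every member of $X$ is of the form $\upr x$, I would fix $A=\upr x$ and $B=\upr y$ and use the three identities recorded in the proof of Theorem~\ref{thm: rpr}, namely $\theta(\upr x)=\dnr x$, $(\dnr x)\cmp=\upr x'$ and $\upr a\cap\upr b=\upr(a\land b)$. These give $A\cap(\theta(A))\cmp=\upr x\cap\upr x'=\upr(x\land x')$, hence $\theta\bigl(A\cap(\theta(A))\cmp\bigr)=\dnr(x\land x')$, while $\theta(A\cap B)=\theta\bigl(\upr(x\land y)\bigr)=\dnr(x\land y)$. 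So condition \eqref{eq:hey} is precisely the inclusion $\dnr(x\land x')\subseteq\dnr(x\land y)$, required for all $x,y\in L$.

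This inclusion is exactly what axiom (I6) buys. By distributivity $x\land(x'\lor y)=(x\land x')\lor(x\land y)$, and by (I6) $x\land(x'\lor y)=x\land y$, so $(x\land x')\lor(x\land y)=x\land y$; since $a\leqr a\lor b$ holds in every join semilattice, this says $x\land x'\leqr x\land y$. Finally $\dnr$ is monotone for $\leqr$: if $a\leqr b$ and $b$ belongs to an ideal $I$, then $a\in I$ because ideals are $\leqr$-down-sets, so $a\notin I$ forces $b\notin I$, i.e.\ $\dnr a\subseteq\dnr b$. Applying this with $a=x\land x'$ and $b=x\land y$ yields the desired inclusion, establishing \eqref{eq:hey}.

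I do not expect a serious obstacle: the genuinely substantial parts are already contained in Theorem~\ref{thm: rpr} and Lemma~\ref{lem:suggerito1.2}, and what is left is the one-line translation of (I6) into \eqref{eq:hey}. The two things that need a moment's care are (a) checking that $\mc{IDBS}\subseteq\mc{DDBS}$ so that Theorem~\ref{thm: rpr} may be quoted verbatim rather than re-proved, and (b) getting the orientation conventions right, so that $\dnr$ turns out $\leqr$-monotone (not antitone) and the inclusion $\dnr(x\land x')\subseteq\dnr(x\land y)$ points the right way.
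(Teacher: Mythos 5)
Your proposal is correct and follows essentially the same route as the paper: it reduces everything to Theorem~\ref{thm: rpr} and Lemma~\ref{lem:suggerito1.2} and then verifies condition \eqref{eq:hey} for $A=\upr x$, $B=\upr y$ using $\upr a\cap\upr b=\upr(a\land b)$, distributivity and axiom (I6). The paper phrases the check as the equality $\dnr(x\land x')\cup\dnr(x\land y)=\dnr(x\land y)$ rather than your inclusion via $\leqr$-monotonicity of $\dnr$, but these are the same computation.
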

\begin{proof}
If $\mb L\in \mc{IDBS}$, assuming all the conventions in the proof of Theorem \ref{thm: rpr}, we can now check condition \eqref{eq:hey},
\begin{eqnarray*}
\theta (\upr x\cap (\upr x)\dgr)\cup\theta(\upr x\cap \upr y) &=&\dnr (\upr x\cap \upr x')\cup\dnr(\upr x\cap \upr y)\\
&=&\dnr \upr(x\land x')\cup\dnr\upr( x\land  y)\\
&=& \dnr(x\land x')\cup\dnr( x\land  y)\\
&=&\dnr\big((x\land x')\lor( x\land  y)\big)\\
&=&\dnr\big(x\land ( x'\lor  y)\big)\\
&=&\dnr(x\land y)\\
&=&\dnr\upr(x\land y)\\
&=&\dnr(\upr x\cap\upr y)\\
&=&\theta(\upr x\cap\upr y).
\end{eqnarray*}
\end{proof} 

Let us remark that Theorem \ref{thm: rpr} can be easily adapted to cover the case of bounded bisemilattices.
In fact, set $X$ and $Y$ as in Theorem \ref{thm: rpr}, and require that they possess a greatest and a smallest element $\overline 1, \overline 0$, respectively. Moreover, let the mapping $\theta$ be as defined on page \pageref{eq:bal1}.\footnote{There is a slight but harmless abuse of language here. In fact, on page \pageref{eq:bal1} there is also a condition involving $^{\star}$, which is clearly unnecessary in this context.} Then, the following theorem holds:
\begin{theorem}\label{thm: rpr2}
An algebra $\mathbf L=\langle L, \land ,\lor , 0,1\rangle$ is in $\mc{BDBS}$ if and only if it is of the form $\mb X=\langle X, \cdot,+, \bot, \top\rangle$, where
\begin{eqnarray*}
\bot&=&\theta^{-1}(\overline0);\\
\top&=&\overline1.
\end{eqnarray*}
\end{theorem}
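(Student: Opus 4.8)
The plan is to prove the biconditional by two separate arguments, each of which is little more than a transcription of a proof already given in the excerpt: the ``if'' direction mirrors the part of Lemma~\ref{lem:suggerito1} that invokes \cite[Thm.~4]{Ba70}, and the ``only if'' direction mirrors the proof of Theorem~\ref{thm: rpr}, in both cases deleting every clause that mentions the involution $'$, the operator $\dgr$, and the order-dual isomorphism $^{\star}$, and then accounting separately for the two constants $\bot,\top$. As the footnote to the statement observes, the part of the specification on page~\pageref{eq:bal1} involving $^{\star}$ plays no role here and is simply discarded.

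\emph{Sufficiency.} Suppose $\mb L=\mb X=\langle X,\cdot,+,\bot,\top\rangle$ with $X$, $Y$, $\theta$ as prescribed, $\bot=\theta^{-1}(\overline 0)$ and $\top=\overline 1$. By \cite[Thm.~4]{Ba70}, exactly as recalled at the start of the proof of Lemma~\ref{lem:suggerito1}, the reduct $\langle X,+,\cdot\rangle$ is a distributive bisemilattice, so by \eqref{eq:bnd} it suffices to check $A\cdot\top=A$ and $A+\bot=A$ for every $A\in X$. The first holds because $\top=\overline 1$ is the greatest element of $X$, so $A\cdot\top=A\cap\overline 1=A$. For the second, since $\theta$ and $\theta^{-1}$ are mutually inverse and $\overline 0$ is the least element of $Y$, we get $A+\bot=\theta^{-1}\big(\theta(A)\cup\theta(\theta^{-1}(\overline 0))\big)=\theta^{-1}(\theta(A)\cup\overline 0)=\theta^{-1}(\theta(A))=A$. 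Hence $\mb X\in\mc{BDBS}$.

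\emph{Necessity.} Let $\mb L=\langle L,\land,\lor,0,1\rangle\in\mc{BDBS}$ and form $\upr x,\dnr x,X,Y,\theta,\theta^{-1}$ as on page~\pageref{upr-dnr-arrows}. Removing from the proof of Theorem~\ref{thm: rpr} every sentence concerned with $'$, $^{\star}$, or $\dgr$ leaves intact the facts that $\langle X,\cap,\upr 1\rangle$ is a meet-semilattice, $\langle Y,\cup,\dnr 0\rangle$ a join-semilattice, $\theta$ a bijection fulfilling \eqref{eq:bal1} and \eqref{eq:un}, and $x\mapsto\upr x$ an isomorphism of the reducts $\langle L,\land,\lor\rangle$ and $\langle X,\cdot,+\rangle$. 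It remains only to pin down the two constants. Since $\mb L$ is bounded, every filter is non-void and, by Definition~\ref{def:fil.id}-(i) together with $a\leqnd 1$, contains $1$; thus $\upr 1=\mc F(L)$ and $\upr x\subseteq\upr 1$ for all $x$, so $\upr 1$ is the greatest element $\overline 1$ of $X$, equal to $\top$ and to the image of $1$. Dually every ideal contains $0$, so $\dnr 0=\emptyset$ is the least element $\overline 0$ of $Y$, whence $\bot=\theta^{-1}(\overline 0)=\theta^{-1}(\dnr 0)=\upr 0$, the image of $0$. Therefore $x\mapsto\upr x$ is an isomorphism of bounded bisemilattices from $\mb L$ onto $\mb X=\langle X,\cdot,+,\bot,\top\rangle$, as required.

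I do not expect a genuine obstacle here; the only point deserving care is the elementary observation that, in the bounded setting, $1$ lies in every filter and $0$ in every ideal, which is precisely what forces $\upr 1$ and $\dnr 0$ to be the extremal elements of $X$ and $Y$ and what makes the representing structure land in $\mc{BDBS}$ on the nose rather than merely after freely adjoining constants. Everything else is a verbatim adaptation of Lemma~\ref{lem:suggerito1} and Theorem~\ref{thm: rpr}.
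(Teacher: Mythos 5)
Your proposal is correct and follows essentially the same route as the paper, whose own proof is just the one-line remark that the result is immediate from Theorems~\ref{thm: rpr} and~\ref{thm: rpr1}; you simply carry out in detail the adaptation (dropping $'$, $^{\star}$, $\dgr$ and checking the bounds via $1\in F$ for every filter and $0\in I$ for every ideal) that the paper leaves implicit. The extra verifications you supply, in particular that $\upr 1=\mc F(L)$ and $\dnr 0=\emptyset$ are the extremal elements, match the observations the paper itself makes later in the proof of Theorem~\ref{thm: toprepr2}.
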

\begin{proof}
Immediate from Theorem \ref{thm: rpr} and Theorem \ref{thm: rpr1}.
\end{proof}
As a side remark to Theorem \ref{thm: rpr2}, let us observe that, since in the context of bounded bisemilattices no order-reversing involution is available, it is necessary to  require that the set $X$ possesses a greatest element $\overline 1$ -- which would not be otherwise expressible -- and then designate, in the structure $\mb X=\ty{X, \cdot,+,\bot, \top}$, $\top=\overline1$. For similar reasons,  we have to require that the set $Y$ possesses a smallest element $\ov0$, and, because the mapping $\te$ is completely arbitrary, we have to define $\theta^{-1}(\overline0)=\bot$.

{Let us observe that, in general, the mappings $\theta, \, \theta^{-1}$ do not form a Galois connection. They do in case the following condition is assumed:
\begin{equation}\label{eq:latt}
\theta^{-1}(\theta(A)\cup\theta(A\cap B))\subseteq A \subseteq A\cap \theta^{-1}(\theta(A)\cup\theta(B)).
\end{equation}
In fact, if condition \eqref{eq:latt} is satisfied, then the structure we obtain is a distributive lattice.}

Finally, we notice that the algebra $\mb 3$ in Example \ref{exmp:3} is isomorphic to the involutive bisemilattice $\ty{\{\upr0, \upr1/2,\upr1\}, \cdot, +,\dgr\bot,\top}$, where $\bot=\upr0$, $\top=\upr1$. Indeed, as it should be, $\upr0+(\upr1/2\cdot\upr0)=\upr1/2\not=\upr0$.

\section{Fspaces and semilattices}\label{sec: fspcs}
Taking up an idea from Dunn and Hartonas \cite{HD97}, the first notion we recall here is the concept of Fspace.
\begin{definition}\label{def:mFsp}
An \emph{Fspace} $X$ is a partially ordered Stone space such that 
\begin{enumerate}
\item For any $U=\{x_a:a\in A\}\subseteq X$ the greatest lower bound of $U$ exists.
\item $X$ has a subbasis of clopen sets $\mc S=\{X_i\}_{i\in I} \cup  \{\overline  X_j\}_{j\in I}$ indexed by some set $I$, such that for each $i\in I$, $X_i$ is a principal upper set, generated by a point
$x_i  \in X$.
\item The subset $\{x_i : i\in I \}\subseteq X$ is join-dense in $X$, that is: every point $x$ is the least
upper bound of the $x_i$ Õs it covers.
\item The collection $X^{*} = \{X_i\}_{i\in I}$ is closed under finite intersections.
\end{enumerate}
\end{definition}
Let us remark that $\overline X_{j}$ denote complements of $X_j$ in $X$, and $x_i$ in point (3) are generators of $X_i$.

It can be seen that, if $X$ is an Fspace, then the set $\{\overline  X_j\}_{j\in I}$ is closed under finite unions. It is also clear that the set $X^{*}$ in Definition \ref{def:mFsp} is a meet semilattice. Fspaces \label{def:Fsp-morf} morphisms are continuous mappings $f: Y\to X$ that preserve greatest lower bounds  in X and such that $f^{-1}$ maps $X^{*}$ into $Y^{*}$.\\

For reader's convenience, let us report here a lemma (see \cite[Lemma 2.2]{HD97}) which will be useful for the development of our discussion.
\begin{lemma}\label{lem:methom}
If $X, Y$ are Fspaces, and $f:Y\to X$ a morphism, then $f^{*}=f^{-1}:X^{*}\to Y^{*}$ is a meet semilattice homomorphism. 
\end{lemma}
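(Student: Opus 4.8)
The plan is to verify directly that $f^{*}=f^{-1}$, restricted to $X^{*}$, lands in $Y^{*}$ and commutes with the meet operations of the two meet semilattices $X^{*}$ and $Y^{*}$. That $f^{-1}$ maps $X^{*}$ into $Y^{*}$ is part of the definition of an Fspace morphism (see page \pageref{def:Fsp-morf}), so there is nothing to prove there; the only real content is that $f^{*}$ preserves finite meets, including the empty meet (the top element). First I would recall that in the meet semilattice $X^{*}=\{X_i\}_{i\in I}$ the meet of two elements $X_i, X_j$ is simply their intersection $X_i\cap X_j$, which again belongs to $X^{*}$ by clause (4) of Definition \ref{def:mFsp}; the same holds in $Y^{*}$. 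Hence the claim $f^{*}(X_i)\cap f^{*}(X_j)=f^{*}(X_i\cap X_j)$ is just the elementary set-theoretic identity $f^{-1}(S)\cap f^{-1}(T)=f^{-1}(S\cap T)$, valid for any function, so preservation of binary meets is automatic.

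The step that needs a genuine argument is preservation of the top element, i.e. that $f^{*}$ sends the greatest element of $X^{*}$ to the greatest element of $Y^{*}$. Here I would use the order-theoretic content of clauses (1)--(3): since every subset of an Fspace has a greatest lower bound, $X$ has a least element $\bot_X=\bigwedge X$, and the top element of the meet semilattice $X^{*}$ is the principal upper set generated by $\bot_X$, which is all of $X$ (and similarly the top of $Y^{*}$ is $Y$). So what must be shown is $f^{-1}(X)=Y$, which is immediate because $f$ is a total function $Y\to X$. If instead one takes the top of $X^{*}$ to be an arbitrary maximal element $X_{i_0}\in X^{*}$ rather than $X$ itself, one argues that a morphism must respect greatest lower bounds, hence send $\bot_Y$ to an element below $\bot_X$, forcing $\bot_X$ itself to be hit and the top principal upset to pull back to the top; the cleanest route, though, is simply to observe that $X\in X^{*}$ (as the intersection of the empty family, or because $X$ is the principal upset of $\bot_X$) and $f^{-1}(X)=Y\in Y^{*}$.

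Putting these pieces together: $f^{*}$ is a well-defined map $X^{*}\to Y^{*}$ by the morphism axiom, it preserves binary meets by the pointwise identity $f^{-1}(S\cap T)=f^{-1}(S)\cap f^{-1}(T)$ together with closure of $X^{*},Y^{*}$ under $\cap$, and it preserves the top element since $f^{-1}(X)=Y$. Therefore $f^{*}$ is a homomorphism of meet semilattices, as claimed. The main obstacle — if there is one at all — is purely bookkeeping: being careful about which set is designated as the unit of $X^{*}$ and confirming that the Fspace axioms guarantee this designated unit is a clopen principal upper set whose preimage is the corresponding unit of $Y^{*}$; once that is pinned down, the proof is a one-line application of elementary properties of preimages. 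I would also remark that continuity of $f$ is not needed for this particular lemma (it is used elsewhere), so I would not invoke it here.
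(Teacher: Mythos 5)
The paper itself does not prove this lemma: it is imported verbatim from Hartonas--Dunn \cite[Lemma 2.2]{HD97}, so there is no in-text argument to compare yours with; judged on its own terms, your verification is essentially the standard one and its core is correct. The two substantive points --- that $f^{*}=f^{-1}$ maps $X^{*}$ into $Y^{*}$ (this is literally part of the definition of Fspace morphism on page \pageref{def:Fsp-morf}) and that it preserves binary meets because preimages commute with intersections while $X^{*}$ and $Y^{*}$ are closed under finite intersections (Definition \ref{def:mFsp}(4)) --- are exactly right, and you are also right that continuity is not needed for this particular claim. One caveat: your handling of the unit overreaches. Definition \ref{def:mFsp} does not guarantee that $X$ itself lies in $X^{*}$ (the empty intersection is not explicitly included), and for an unbounded distributive bisemilattice $\mb L$ one has $X^{*}\cong \mb L$ as meet semilattices via $a\mapsto X_{a}$, so $X^{*}$ may have no greatest element at all; hence the claim that the top of $X^{*}$ is the principal upset of $\bot_{X}$, i.e.\ all of $X$, is not justified in general. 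The alternative argument you sketch via preservation of greatest lower bounds is also shaky, since $f(\bot_{Y})$ need not equal $\bot_{X}$. None of this damages the lemma as stated, because only binary meets are at issue; and in the bounded and De Morgan settings, where the paper later does invoke $\top$-preservation (Lemma \ref{lem:morph trans3}), the unit of $X^{*}$ is $X_{1}=X$, so your observation that $f^{-1}(X)=Y$ is precisely what is needed there.
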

The notion of Fspace is essential for the construction of dual frames of ortholattices and lattices, in general.
\begin{definition}\label{def:frame}
A \emph{frame} is a triple $\ty{X, \bot, Y}$, where $X,\, Y$ are sets, perhaps with some additional structure, and $\bot\subseteq X\times Y$ is a binary relation that induces a Galois connection $(\lambda,\rho)$ between $\wp(X)$ and $\wp(Y)$, defined for $U\in \wp(X)$ and $V\in\wp(Y)$ by:
\[
\lambda U=\{y\in Y:U\bot y\}\text{ and }\rho V=\{x\in X:x\bot V\},
\]
where $U\bot y$ means that for all $u\in U$: $u\bot y$ and dually for $x\bot V$.\footnote{For a set $Z$, the writing $\wp(Z)$ denotes its power set.}
\end{definition}

In \cite{Go75}, R. Goldblatt showed that, in the case of ortholattices, their dual frames are just two copies of the space of filters, while the (irreflexive and symmetric) relation $\bot$ is defined on the filter space by $x\bot y$ if and only if there is a $a\in x$ such that $\lnot a\in y$.
{As we will see, in the context of distributive bisemilattices it will be required to take into account \emph{both} the spaces of filters and ideals, since they capture possibly different semilattice orders.}

\section{A Stone-type representation for $\mc{DBS}$}\label{sec:top}
In order to provide a Stone-type duality for distributive bisemilattices, in this section, we combine ideas from section \ref{sec:rep} together with Hartonas and Dunn duality for lattices. In fact, we establish a new representation, combining extensions of Balbes' representation of distributive bisemilattices with  Hartonas and Dunn's results on lattices \cite{HD97}.

To this aim, we borrow from \cite{HD97} the idea of Fspace (see Definition \ref{def:mFsp}) and introduce the notion of 2space.
Here, the main role will be played by a bijective correspondence $\rho$ (satisfying a few opportune requirements) between the two Fspaces involved in the notion of 2space. As we will see, in general, the mapping $\rho$ could be quite arbitrary. {The idea behind our construction is rather simple. Consider a distributive bisemilattice $\mb L$. We put together the Fspaces associated to meet and join semilattice reducts of $\mb L$, and then use an adaptation of Balbes' conditions to connect the two spaces. This is somehow similar to what Hartonas and Dunn call a  ``$\bot$-frame'': a couple of Fspaces $X$ and $Y$ connected by a mapping $\bot$. However, it is our impression that we are moving along different degrees of freedom: in our case we are looking for a counterpart of distributivity laws, but not for absorption, in general. In their case, instead, in a $\bot$-frame the mapping $\bot$ induces a Galois connection. For the context of lattices, indeed, the Fspaces $X$ and $Y$ are supposed to reflect (dually) the same ordering. Instead, in a 2space the Fspaces involved may capture orders that are completely unrelated, and therefore the mapping $\rho$ could be as rowdy as possible.}
{None the less, in case the structure we start with is a distributive lattice, what we obtain from our construction are just two homeomorphic copies of the same space.}

In fact, it is perhaps worth stressing the fact that, if in the context of ortholattices and lattices the filter space and the ideal space capture, dually, one and the same order, this is no longer the case for bisemilattices, in general. As we have already mentioned, the filter space and the ideal space of a bisemilattice need not bear any relation with each other. This is a consequence of the fact that the notion of filter captures the meet-order $\leq_{\land}$, while the concept of ideal is relative to the join-order $\leq_{\lor}$. And, in a bisemilattice, the orders $\leq_{\land}$ and $\leq_{\lor}$ may be highly unrelated (the unique interaction coming from distributivity laws), because absorption, in general, may fail (see Example \ref{exmp:3}).

\begin{definition}\label{def:2sp}
A \emph{2space} is a triple $\langle X,\rho, Y\rangle$ such that:
\begin{enumerate}
 \item $X, Y$ are Fspaces with subbases indexed by a set $A$ (Definition \ref{def:mFsp});
 \item $\rho: X^{*}\to \overline Y^{*}$ is a bijective correspondence that satisfies conditions \eqref{eq:bal1} and \eqref{eq:un} on page \pageref{eq:bal1} and:
\begin{equation}\label{eq:rho}
 \rho(X_a)=\overline  Y_{a}.
\end{equation}
\end{enumerate}
\end{definition}
{As we mentioned, in general, $\rho,\, \rho^{-1}$ need not be polarities (an antitone Galois connection), and can be as arbitrary as possible. As regards their logical significance, the maps $\rho,\,\rho^{-1}$ connecting the Fspaces $X$ and $Y$ should not be considered sorts of ``generalized negations'', as in the case of polarities in a $\bot$-frame \cite{HD93}. The situation will be different in section \ref{sec:demor}, where a duality for $\mc{DDBS}$ will be discussed.}

Taking advatage of Definition \ref{def:2sp}, and (part of) the ideas from Theorem \ref{thm: rpr} we obtain that 

\begin{lemma}\label{lem:2sp-bsem}
 If $\langle X,\rho, Y\rangle$ is a 2space, the structure $\langle X^{*}, +,\cdot\rangle$ is a distributive bisemilattice, where, for all $Z,W\in X^{*}$
\begin{eqnarray*}
Z\cdot W&=&Z\cap W;\\
Z+W&=&\rho^{-1}(\rho(Z)\cup\rho(X)).
\end{eqnarray*}
 
\end{lemma}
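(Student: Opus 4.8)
The plan is to observe that a 2space packages exactly the set-theoretic data used in Lemma~\ref{lem:suggerito1} (stripped of the involution $^{\star}$ and of the bounds), so that the computation carried out there --- ultimately Balbes' argument \cite[Thm.~4]{Ba70} --- applies verbatim; no topology is needed for this lemma. First I would record the ambient bookkeeping: $X^{*}=\{X_{a}\}_{a\in A}$ is a family of sets closed under finite intersections by condition (4) of Definition~\ref{def:mFsp}, and $\overline Y^{*}=\{\overline Y_{a}\}_{a\in A}$ is a family of sets closed under finite unions by the remark following Definition~\ref{def:mFsp}. The bijection $\rho\colon X^{*}\to\overline Y^{*}$ then plays precisely the role of the map $\theta$ of Lemma~\ref{lem:suggerito1}, and by condition (2) of Definition~\ref{def:2sp} it satisfies \eqref{eq:bal1} and \eqref{eq:un}.

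Next I would check that the two operations are well defined on $X^{*}$. For $\cdot$ this is immediate, since $Z\cdot W=Z\cap W\in X^{*}$ by closure under finite intersections. For $+$, given $Z,W\in X^{*}$ we have $\rho(Z),\rho(W)\in\overline Y^{*}$, hence $\rho(Z)\cup\rho(W)\in\overline Y^{*}$ by closure under finite unions, and since $\rho$ is a bijection \emph{onto} $\overline Y^{*}$ the set $Z+W=\rho^{-1}(\rho(Z)\cup\rho(W))$ is a well-defined element of $X^{*}$.

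The semilattice axioms are then routine. The reduct $\langle X^{*},\cdot\rangle$ is a meet semilattice because $\cap$ is idempotent, commutative and associative; for $\langle X^{*},+\rangle$, idempotency follows from $\rho^{-1}(\rho(Z)\cup\rho(Z))=\rho^{-1}(\rho(Z))=Z$, commutativity from commutativity of $\cup$, and associativity by unwinding $(Z+W)+V=\rho^{-1}\big((\rho(Z)\cup\rho(W))\cup\rho(V)\big)$ and $Z+(W+V)=\rho^{-1}\big(\rho(Z)\cup(\rho(W)\cup\rho(V))\big)$ and using associativity of $\cup$. Distributivity is precisely what the two Balbes-type conditions encode: instantiating \eqref{eq:bal1} (with $\rho$ in place of $\theta$) at $A=Z$, $B=W$, $C=V$ rewrites, after replacing $\rho^{-1}(\rho(\,\cdot\,)\cup\rho(\,\cdot\,))$ by $+$ and $\cap$ by $\cdot$, as $Z\cdot(W+V)=(Z\cdot W)+(Z\cdot V)$; and instantiating \eqref{eq:un} at $P=\rho(Z)$, $Q=\rho(W)$, $R=\rho(V)$ and applying $\rho^{-1}$ to both sides rewrites as $Z+(W\cdot V)=(Z+W)\cdot(Z+V)$. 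Hence both reducts are semilattices and both distributive laws hold, so $\langle X^{*},+,\cdot\rangle\in\mc{DBS}$.

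I do not expect a genuine obstacle here; the only point requiring care is the bookkeeping --- making sure that $\rho^{-1}$ is only ever applied to sets that actually lie in $\overline Y^{*}$ (which is exactly why closure of $\overline Y^{*}$ under finite unions is used) and that the index conventions match, so that the appeal to the set-theoretic computation of Lemma~\ref{lem:suggerito1}/\cite[Thm.~4]{Ba70} is legitimate. Condition \eqref{eq:rho} of Definition~\ref{def:2sp} is not needed for this lemma; it will come into play only when, conversely, one shows that every distributive bisemilattice arises this way.
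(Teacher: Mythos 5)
Your proposal is correct and follows essentially the same route as the paper: the paper likewise observes that $X^{*}$ and $\overline Y^{*}$ are meet and join semilattices, that Definition~\ref{def:2sp}-(2) gives closure of $X^{*}$ under $+$ (and the semilattice laws, of which it spells out only associativity), and that both distributive laws follow from conditions \eqref{eq:bal1} and \eqref{eq:un} exactly as in the Balbes-type argument of Theorem~\ref{thm: rpr}/Lemma~\ref{lem:suggerito1}. Your silent correction of the typo $\rho(X)$ to $\rho(W)$ in the definition of $+$, and your remark that condition \eqref{eq:rho} is not needed here, are both consistent with the paper's intent.
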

\begin{proof}
By Definition \ref{def:mFsp}, $X^{*}$ and $\overline{Y}^{*}$ are meet and join semilattices, respectively. Then, by Definition \ref{def:2sp}-(2), $X^{*}$ is closed under $+$. We only prove associativity of $+$. It can be seen that, for $H,K,J\in X^{*}$, 
\begin{eqnarray*}
 (H+K)+J&=&\rho^{-1} (\rho\rho^{-1}(\rho(H)\cup\rho (K))\cup\rho (J))\\
& =&\rho^{-1} (\rho(H)\cup\rho (K)\cup\rho (J))\\
&=&\rho^{-1} (\rho(H)\cup\rho\rho^{-1}(\rho (K)\cup\rho (J)))\\
&=&H+(K+J)
\end{eqnarray*}
Finally, as in the proof of Theorem \ref{thm: rpr}, by virtue of conditions \eqref{eq:bal1} and \eqref{eq:un} both distributive laws obtain.
\end{proof}
Consider a distributive bisemilattice $\mb L$, and let $X=\mc F(L)$, $Y=\mc I(L)$, the sets of its filters and ideals, respectively. In order to keep the notation uniform, in the present, and in section \ref{sec:demor}, we will adopt the following linguistic conventions, for $a\in L$ we set 
\[
X_{a}=\{F\in X: a\in F\}, \qquad\overline X_{a}=\{F\in X: a\notin F\}, 
\]
and dually for $Y$.

\begin{theorem}\label{thm: toprepr}
Let $\mathbf L$ be a distributive bisemilattice. Set 
\[
\mathcal S= \{X_a\}_{a\in L} \cup  \{\overline  X_a\}_{a\in L}\mbox{ and }\mathcal P= \{Y_a\}_{a\in L} \cup  \{\overline  Y_a\}_{a\in L},
\]
as subbases for $X$ and $Y$, respectively, and define $\rho(X_a)=\overline  Y_{a}$. Then $\langle X,\rho, Y\rangle$ is a 2space.
\end{theorem}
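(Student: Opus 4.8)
The plan is to verify directly that $\langle X, \rho, Y\rangle$ meets all the clauses of Definition~\ref{def:2sp}. The work splits naturally into three parts: (a) showing that $X = \mc F(L)$ and $Y = \mc I(L)$, equipped with the subbases $\mathcal S$ and $\mathcal P$, are genuinely Fspaces in the sense of Definition~\ref{def:mFsp}; (b) checking that $\rho\colon X^{*}\to\overline Y^{*}$, defined on generators by $\rho(X_a)=\overline Y_a$, is a well-defined bijection; and (c) checking that $\rho$ satisfies the Balbes-type conditions \eqref{eq:bal1} and \eqref{eq:un}, together with the normalization \eqref{eq:rho}, the last of which is true by construction. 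Much of this is bookkeeping that parallels the proof of Theorem~\ref{thm: rpr}, so I would keep the routine verifications brief and concentrate on the points that are genuinely new.

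First I would address (a). The filter space $X=\mc F(L)$ is topologized with the sets $X_a=\{F : a\in F\}$ and their complements $\overline X_a$ as a subbasis; one checks this is a Stone space (compactness via the prime filter separation Theorem~\ref{thm:fil-sep}, total disconnectedness from the separating clopens, Hausdorff likewise) and that the specialization/inclusion order is a poset in which arbitrary meets exist — the meet of a family of filters being the filter generated by their intersection, which is nonempty since any filter contains a top-type element only if $L$ is bounded; in the unbounded case one takes the least filter containing the relevant elements, and one must be slightly careful here. Clause (2) of Definition~\ref{def:mFsp} holds because each $X_a$ is the principal upper set generated by the principal filter $\upr a$ (using the notation of Section~\ref{sec:rep}); clause (3), join-density of the principal filters, follows because every filter is the directed union — hence least upper bound — of the principal filters it contains; clause (4), closure of $X^{*}=\{X_a\}_{a\in L}$ under finite intersection, is exactly the identity $X_a\cap X_b = X_{a\land b}$, proved as in Theorem~\ref{thm: rpr} from $a\land b\leqnd a,b$ and Definition~\ref{def:fil.id}. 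Symmetrically $Y=\mc I(L)$ is an Fspace with $\overline Y_a\cap\overline Y_b=\overline Y_{a\lor b}$ giving closure of $\overline Y^{*}$ under finite intersection (equivalently, $Y^{*}=\{Y_a\}$ is closed under the union coming from $Y_a\cup Y_b=Y_{a\lor b}$), which is the form needed so that $\rho$ can land in $\overline Y^{*}$.

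For (b), the assignment $X_a\mapsto\overline Y_a$ is well-defined and injective precisely because $a\mapsto X_a$ and $a\mapsto\overline Y_a$ are both injective: the first by the prime filter separation theorem (Theorem~\ref{thm:fil-sep}) and the second by its dual for ideals, so $X_a=X_b$ iff $a=b$ iff $\overline Y_a=\overline Y_b$; surjectivity onto $\overline Y^{*}=\{\overline Y_a\}_{a\in L}$ is immediate. Thus $\rho$ and $\rho^{-1}\colon\overline Y_a\mapsto X_a$ are mutually inverse bijections, and \eqref{eq:rho} holds by definition. Finally, for (c), conditions \eqref{eq:bal1} and \eqref{eq:un} are verified by exactly the chains of equalities already carried out in the proof of Theorem~\ref{thm: rpr}: translating $X_a\leftrightarrow\upr a$ and $\overline Y_a\leftrightarrow\dnr a$, condition \eqref{eq:bal1} unwinds via $X_a\cap X_b=X_{a\land b}$, $\rho^{-1}(\overline Y_a\cup\overline Y_b)=\rho^{-1}(\overline Y_{a\lor b})=X_{a\lor b}$, and the distributive law $x\land(y\lor z)\approx(x\land y)\lor(x\land z)$ in $\mb L$; condition \eqref{eq:un} is the order-dual computation using the other distributive law. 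I expect the main obstacle to be part (a): carefully confirming that the full filter space (not just the prime-filter space) really is a partially ordered Stone space with all greatest lower bounds, in particular handling the existence of meets and compactness without the convenience of absorption or boundedness — everything else is a transcription of the Balbes-style arithmetic already in hand.
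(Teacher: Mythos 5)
Your overall strategy coincides with the paper's: verify the Fspace clauses of Definition \ref{def:mFsp} for the filter and ideal spaces, obtain bijectivity of $\rho$ from the prime filter separation theorem (Theorem \ref{thm:fil-sep}) and its dual for ideals, and import conditions \eqref{eq:bal1} and \eqref{eq:un} from the computations already carried out in the proof of Theorem \ref{thm: rpr}. Your handling of clauses (2)--(4) (each $X_a$ is the principal upper set of the principal filter generated by $a$, join-density, and $X_a\cap X_b=X_{a\land b}$, dually $\overline Y_a\cup' \;$-closure via $Y_a\cup Y_b=Y_{a\lor b}$) is correct and in fact more explicit than the paper, which simply refers to the semilattice case.

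The genuine gap is compactness, which is precisely the one step the paper proves in detail and which you both leave unproved and misattribute: compactness of the space of \emph{all} filters does not follow ``via the prime filter separation Theorem''. The needed argument is an Alexander-subbase-style one: suppose $\{X_p: p\in P\}\cup\{\overline X_q: q\in Q\}$ is a subbasic cover with no finite subcover, and let $F_Q=\{x\in L: \exists q_1,\dots,q_n\in Q\ (\bigwedge_{i=1}^n q_i\leqnd x)\}$ be the ordinary (not prime) filter generated by $Q$. Then $F_Q\notin\overline X_q$ for every $q\in Q$; and if some $a\in F_Q\cap P$, choose $q_1,\dots,q_n\in Q$ with $\bigwedge_{i=1}^n q_i\leqnd a$ — since the finite subfamily $\{\overline X_{q_1},\dots,\overline X_{q_n},X_a\}$ is not a cover, there is a filter $H$ containing every $q_i$ but not $a$, contradicting $\bigwedge_{i=1}^n q_i\leqnd a$. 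Hence $F_Q$ is covered by no member of the family, contradicting that it was a cover. No primality enters anywhere, and Theorem \ref{thm:fil-sep} alone does not yield this. Your other flagged worry, existence of arbitrary greatest lower bounds, is less serious than you suggest: an intersection of filters is upward closed and closed under $\land$, so whenever nonempty it is itself the glb (your ``filter generated by the intersection'' is just that intersection); the residual degenerate case is glossed over by the paper as well, which simply asserts that $X$ carries a complete lattice structure, so on that point you are no vaguer than the source.
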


\begin{proof}
The proof that $X$ and $Y$ are Fspaces are generalizations of the case of semilattices. For reader's convenience, we resume the major steps of the proof. Suppose that $F\not=G$ in $X$. Then, there is an $a\in L$ such that $a$ belongs to $ F$ but not to $G$. Therefore $F\in X_a$ and $G\in \overline  X_a$. Evidently, the set $X$ has a complete lattice structure, and the lattice is generated by its finitely generated members. As regards compactness, suppose that, for $P,Q\subseteq L$, $\{X_{p}:p\in P\}\cup \{\overline  X_{q}:q\in Q\}$ is a subbasic cover that does not admit any finite subcover. Let 
\[
F_{Q}=\{x\in L:\exists q_1,\ldots, q_n\in Q\,\big(\bigwedge_{i=1}^{n}q_i\leq_{\land} x\big)\}
\]
 be the filter generated by $Q$ in $L$. Obviously $F_{Q}\in \bigcap_{q\in Q}X_{q}$, and therefore $F_{Q}\notin \bigcup_{q\in Q}\overline  X_{q}$. We now show that $F_{Q}\cap P=\emptyset$.
Suppose that there is an $a\in L$ s.t. $a\in F_{Q}\cap P$. Because $F_{Q}$ is generated by ${Q}$, there are $q_{1},\ldots,q_{n}$ in $Q$ s.t. $\bigwedge_{i=1}^{n}q_i\leq_{\land} a$.  Since compactness fails, there is an $H\in X$ so that $H\notin \bigcup_{i=1}^{n} \overline  X_{q_{i}}\cup X_{a}$. But then, $\{q_i\}_{i\in I}\subseteq H$ and $a\notin H$. And so $\bigwedge_{i=1}^{n}q_i\not\leq_{\land} a$: a contradiction. Therefore, there is no $X_{p}$, for $p\in P$, such that $F_Q\in X_p$. Hence $\{X_{p}:p\in P\}\cup \{\overline  X_{q}:q\in Q\}$ is not a cover for $X$, because it does not contain $F_Q$. That $\rho$ is a bijective correspondence between $X^{*}$ and $\overline  Y^{*}$ derives from prime filter/ideal separation Theorem. Finally, conditions \eqref{eq:bal1} and \eqref{eq:un} follow from the proof of Theorem \ref{thm: rpr}.
\end{proof}
By Theorem \ref{thm: toprepr}, for brevity sake, if $\mb L$ is a distributive bisemilattice, we will denote by $S(\mb L)$ the 2space associated to $\mb L$.

We have seen in the section \ref{sec: fspcs} (cf. page \pageref{def:Fsp-morf}) the notion of Fspace morphism. We expand this idea to the framework of 2spaces.

\begin{definition}\label{def:2sp morph}
Let $\langle X,\rho, Y\rangle$ and $\langle Z,\rho, W\rangle$ be two 2spaces. A \emph{2space-morphism} (briefly, a \emph{morphism}) is a pair $(\psi,\chi)$ of morphisms, where $\psi:X\to Z,\,\chi:Y\to W$ are Fspace morphisms, such that the following diagram is commutative:
\begin{center}
 \begin{equation}\label{diag1}
  \xymatrix{
  {X^{*}}&&\ar@{->}^{\rho^{-1}}[ll]\overline  Y^{*}\\\\
Z^{*}\ar@{->}^{\psi^{-1}=\psi^{*}}[uu]\ar@{->}^{\sigma}[rr]&&\overline  W^{*}\ar@{->}^{\chi^{-1}=\chi^{*}}[uu]
}
\end{equation}
\end{center}
\end{definition}
From Definition \ref{def:2sp morph} we obtain:
\begin{lemma}\label{lem:morph trans}
If $\langle X,\rho, Y\rangle$ and $\langle Z,\sigma, W\rangle$ be 2spaces and $(\psi,\chi)$ a morphism between them, then $\psi^{*}:Z^{*}\to X^{*}$ is a distributive bisemilattice homomorphism.
\end{lemma}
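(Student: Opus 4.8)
The plan is to check that $\psi^{*}$ preserves each of the two operations, $\cdot$ and $+$, that by Lemma~\ref{lem:2sp-bsem} make $Z^{*}$ and $X^{*}$ into distributive bisemilattices. Preservation of $\cdot$ needs nothing new: since $\psi\colon X\to Z$ is an Fspace morphism, Lemma~\ref{lem:methom} gives that $\psi^{*}=\psi^{-1}\colon Z^{*}\to X^{*}$ is a meet-semilattice homomorphism, and $\cdot$ is intersection on both sides. So the real content is the identity $\psi^{*}(H+K)=\psi^{*}(H)+\psi^{*}(K)$ for $H,K\in Z^{*}$.

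I would first record the complement-side companion of Lemma~\ref{lem:methom}: the map $\chi^{*}=\chi^{-1}$, regarded as $\overline{W}^{*}\to\overline{Y}^{*}$, is a join-semilattice homomorphism. Indeed preimages always preserve unions, and $\chi^{*}$ is well defined as a map $\overline{W}^{*}\to\overline{Y}^{*}$ since $\chi^{-1}(\overline{W}_{i})=\overline{\chi^{-1}(W_{i})}$ with $\chi^{-1}(W_{i})\in Y^{*}$ by the definition of Fspace morphism; recall also (the remark after Definition~\ref{def:mFsp}) that $\overline{W}^{*}$ and $\overline{Y}^{*}$ are closed under finite unions.

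Next I would unwind Definition~\ref{def:2sp morph}: commutativity of the defining diagram says exactly $\psi^{*}=\rho^{-1}\circ\chi^{*}\circ\sigma$ as maps $Z^{*}\to X^{*}$, equivalently, applying $\rho$, that $\rho\circ\psi^{*}=\chi^{*}\circ\sigma$. Then, for $H,K\in Z^{*}$, using $H+K=\sigma^{-1}(\sigma(H)\cup\sigma(K))$ and that $\sigma$ is a bijection onto $\overline{W}^{*}$, so $\sigma(H+K)=\sigma(H)\cup\sigma(K)$, one computes
\begin{align*}
\psi^{*}(H+K)&=\rho^{-1}\bigl(\chi^{*}(\sigma(H+K))\bigr)=\rho^{-1}\bigl(\chi^{*}(\sigma(H)\cup\sigma(K))\bigr)\\
&=\rho^{-1}\bigl(\chi^{*}(\sigma(H))\cup\chi^{*}(\sigma(K))\bigr)=\rho^{-1}\bigl(\rho(\psi^{*}(H))\cup\rho(\psi^{*}(K))\bigr)=\psi^{*}(H)+\psi^{*}(K),
\end{align*}
where the third equality is the companion fact above, the fourth is $\rho\circ\psi^{*}=\chi^{*}\circ\sigma$, and the last is the definition of $+$ in $X^{*}$. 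This completes the proof.

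I do not expect any genuine obstacle; the argument is essentially the observation that the commuting square transports the formula $\rho^{-1}(\rho(-)\cup\rho(-))$ defining $+$ through $\psi^{*}$. The only things needing a little care are bookkeeping: confirming that $\psi^{*}$ and $\chi^{*}$ really land in $X^{*}$ and $\overline{Y}^{*}$ (built into the notion of Fspace morphism together with Definition~\ref{def:mFsp}), and keeping track at each step of whether one is computing on the ``$X^{*}$-side'' (meets) or the ``$\overline{Y}^{*}$-side'' (joins).
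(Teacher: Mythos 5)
Your proposal is correct and follows essentially the same route as the paper's proof: meet preservation is delegated to Lemma \ref{lem:methom}, and join preservation is obtained by transporting the defining formula for $+$ through the commuting square of Definition \ref{def:2sp morph} (i.e.\ $\rho\circ\psi^{*}=\chi^{*}\circ\sigma$), using that preimages preserve unions and checking that the relevant sets lie in $\overline{Y}^{*}$ so that $\rho^{-1}$ applies. The only difference is cosmetic bookkeeping (you apply $\sigma$ to $H+K$ first, whereas the paper expands the definition of $+$ and cancels $\sigma\sigma^{-1}$), so no further comparison is needed.
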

\begin{proof}
By Lemma \ref{lem:methom} (see also \cite[Lemma 2.2]{HD97}), $\psi^{*}$ is a meet-semilattice homomorphism between $Z^{*}$ and $X^{*}$. It remains to be verified that $+$ is preserved by $\psi^{*}$. Indeed, for $H,K\in Z^{*}$, 
\begin{align*}
\psi^{*}(H+K)=&\psi^{*}\sigma^{-1}(\sigma(H)\cup\sigma(K))&(\tiny{\text{definition of }+})\\
=&\psi^{-1}\sigma^{-1}(\sigma(H)\cup\sigma(K))&(\tiny{\text{definition of }\psi^{*}})\\
=&\rho^{-1}\chi^{-1}\sigma\sigma^{-1}(\sigma(H)\cup\sigma(K))&(\tiny{\text{commutativity of diagram \eqref{diag1}}})\\
=&\rho^{-1}\chi^{-1}(\sigma(H)\cup\sigma(K))&
\end{align*}

and this is equal to $\rho^{-1}(\chi^{-1}\sigma(H)\cup\chi^{-1}\sigma(K))$. 
From the fact that $\chi$ is an Fspace morphism, it follows that $\chi^{-1}\sigma(H),\chi^{-1}\sigma(K)\in \overline Y^{*}$, and then $\chi^{-1}\sigma(H)\cup\chi^{-1}\sigma(K)\in \overline Y^{*}$. Thus, by Definition \ref{def:2sp}-(2), we obtain that $\rho^{-1}(\chi^{-1}\sigma(H)\cup\chi^{-1}\sigma(K))\in X^{*}$.
Upon noticing that, because of the commutativity of diagram \ref{diag1}, for $W\in Z^{*}$, 
\begin{eqnarray*}
\rho(\psi^{*}(W))&=&\rho(\psi^{-1}(W))\\
&=&\rho(\rho^{-1}(\chi^{*}(\sigma(W))))\\
&=&\rho(\rho^{-1}(\chi^{-1}(\sigma(W))))\\
&=&\chi^{-1}(\sigma(W))\\
&=&\chi^{*}(\sigma(W))
\end{eqnarray*}
 we have that $\rho^{-1}(\chi^{-1}\sigma(H)\cup\chi^{-1}\sigma(K))=\rho^{-1}(\rho(\psi^{*}(H))\cup\rho(\psi^{*}(K)))=\psi^{*}(H)+\psi^{*}(K)$, by the definition of $+$. 
\end{proof}
We now prove a converse of the previous statement:

\begin{lemma}\label{lem:morph trans 2}
Let $f:\mathbf L\to \mb M$ be a distributive bisemilattice homomorphism and $S(\mb L)=\langle \mc F(L), \rho, \mc I(L)\rangle$, $S(\mb M)=\langle \mc F(M), \sigma, \mc I(M)\rangle$ the associated 2spaces. Then $S(f)=(f^{-1},f^{-1}): S(\mb M)\to S(\mb L)$ is a 2space morphism.
\end{lemma}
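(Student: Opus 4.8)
The plan is to verify that the pair $S(f)=(f^{-1},f^{-1})$ satisfies the two requirements in the definition of a $2$space morphism (Definition \ref{def:2sp morph}): first, that $f^{-1}:\mc F(M)\to\mc F(L)$ and $f^{-1}:\mc I(M)\to\mc I(L)$ are well-defined Fspace morphisms; second, that the square \eqref{diag1} commutes with $\psi=f^{-1}$ on filters, $\chi=f^{-1}$ on ideals, and $\sigma$ the bijection attached to $S(\mb M)$.

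First I would check that $f^{-1}$ maps filters of $\mb M$ to filters of $\mb L$ and ideals to ideals. If $G\in\mc F(M)$, then $f^{-1}(G)$ is nonempty (it contains the preimage of any element of $G$ is not automatic, but one uses that $f$ being a homomorphism and $G$ a filter closed under $\land$ forces $f^{-1}(G)$ to be $\leqnd$-upward closed and $\land$-closed); the dual argument handles ideals. Then I would verify continuity and preservation of greatest lower bounds: since the topology on $\mc F(L)$ is generated by the subbasic clopens $X_a,\overline X_a$ and $(f^{-1})^{-1}(X_a)=\{G\in\mc F(M): f(a)\in G\}=X_{f(a)}$ (and similarly $(f^{-1})^{-1}(\overline X_a)=\overline X_{f(a)}$), preimages of subbasic opens are subbasic opens, so $f^{-1}$ is continuous and moreover $(f^{-1})^{-1}$ sends $X^{*}$ into $(\mc F(M))^{*}$, giving the second clause of the Fspace-morphism definition (page \pageref{def:Fsp-morf}). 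Preservation of arbitrary meets in the filter space follows because meets there are intersections of filters and $f^{-1}$ commutes with intersections; the ideal case is dual.

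Next I would establish commutativity of \eqref{diag1}. The key computation is that for each $a\in L$, chasing $Z^{*}=(\mc F(L))^{*}$ starting from $X_a$: going up via $\psi^{*}=(f^{-1})^{-1}$ lands on $X_{f(a)}\in(\mc F(M))^{*}$ — wait, the diagram has $Z^{*}$ at the bottom left, so in the notation of Lemma \ref{lem:morph trans 2} the bottom space is $S(\mb L)$ and the top is $S(\mb M)$; I would therefore track $X_a\in(\mc F(L))^{*}$ upward under $\psi^{*}$ to $X_{f(a)}\in(\mc F(M))^{*}$, then leftward under $\sigma^{-1}$ — no, $\sigma^{-1}$ goes from $\overline{\mc I(M)}^{*}$ to $(\mc F(M))^{*}$, so I follow the other route: $X_a\in(\mc F(L))^{*}$ maps under $\rho$ to $\overline Y_a\in\overline{\mc I(L)}^{*}$, then under $\chi^{*}=(f^{-1})^{-1}$ to $\overline{Y}_{f(a)}\in\overline{\mc I(M)}^{*}$, then under $\sigma^{-1}$ to $X_{f(a)}\in(\mc F(M))^{*}$, which agrees with $\psi^{*}(X_a)=(f^{-1})^{-1}(X_a)=X_{f(a)}$. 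Since $X^{*}$ is generated (as a meet semilattice) by the $X_a$'s and all maps in sight preserve the relevant structure, agreement on subbasic generators suffices. I would present this as the single display chasing $\sigma^{-1}\circ\chi^{*}\circ\rho(X_a)=X_{f(a)}=\psi^{*}(X_a)$ and then invoke density/closure under intersection to extend to all of $X^{*}$.

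The main obstacle, and the place I would be most careful, is the bookkeeping of which Fspace sits where in diagram \eqref{diag1} together with the fact that $\rho$ (unlike a polarity) carries no order-theoretic content, so every identification must be justified purely by the defining equation $\rho(X_a)=\overline Y_a$ (Definition \ref{def:2sp}-(2), eq. \eqref{eq:rho}) and the analogous $\sigma(X_a^{M})=\overline Y_a^{M}$ for $\mb M$; there is no shortcut via "$\rho$ is antitone". A secondary subtlety is confirming that $f^{-1}$ genuinely lands inside $X^{*}$ rather than merely inside the clopens — this is exactly the content of $(f^{-1})^{-1}(X_a)=X_{f(a)}\in(\mc F(M))^{*}$, and it is what makes the Fspace-morphism condition "$f^{-1}$ maps $X^{*}$ into $Y^{*}$" hold. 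Once these indexing issues are pinned down the verification is routine.
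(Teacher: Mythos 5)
Your proposal is correct and follows essentially the paper's own route: the paper handles the Fspace-morphism part simply by citing Lemma 2.5 of \cite{HD97} rather than re-verifying continuity, preservation of greatest lower bounds and the $^{*}$-condition, and then establishes commutativity of diagram \eqref{diag1} by exactly your subbasic computation, namely $S(f)^{-1}(X_{a})=X_{f(a)}$ and $S(f)^{-1}(\overline Y_{a})=\overline Y_{f(a)}$ combined with the defining equations $\rho(X_{a})=\overline Y_{a}$ and $\sigma(X_{f(a)})=\overline Y_{f(a)}$. Your bookkeeping of which structure map sits on which side of the square is sound and in fact cleaner than the paper's displayed chain (which swaps the names $\rho$, $\sigma$ relative to the lemma's statement and writes $\overline X$ where the ideal-space complements are meant), so there is no essential difference in approach.
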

\begin{proof}
Let us observe that $S(f)$ is an Fspace morphism with respect to both $\mc F(L)$ and $\mc I(L)$. For details we refer the reader to \cite[Lemma 2.5]{HD97}. As regards commutativity of diagram \ref{diag1}, upon noticing that, for any $a\in L$, $S(f)^{-1}(X_{a})=X_{f(a)}$ and $S(f)^{-1}(\overline  X_{a})=\overline  X_{f(a)}$, we compute: 
\begin{eqnarray*}
S(f)^{*}(X_{a})&=&S(f)^{-1}(X_{a})\\
&=&X_{f(a)}\\
&=&\rho^{-1} (\overline X_{f(a)})\\
&=&\rho^{-1} (S(f)^{-1}(\overline  X_{a}))\\
&=&\rho^{-1} (S(f)^{-1}(\sigma  (X_{a})))\\
&=&\rho^{-1} (S(f)^{*}(\sigma  (X_{a})))
\end{eqnarray*}
which is exactly our requirement.
\end{proof}
Combining what we have so far, together with \cite[Proposition 2.11]{HD97}, we obtain the following theorem:
\begin{theorem}\label{thm: oneway}
Every 2space is of the form $S(\mb L)$, for a distributive bisemilattice $\mb L$.
\end{theorem}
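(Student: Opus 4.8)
The plan is to exhibit, for a given 2space $\langle X,\rho,Y\rangle$, a distributive bisemilattice $\mb L$ together with an isomorphism of 2spaces $\langle X,\rho,Y\rangle\cong S(\mb L)$. The candidate is $\mb L:=\langle X^{*},+,\cdot\rangle$, which lies in $\mc{DBS}$ by Lemma~\ref{lem:2sp-bsem}. Writing $S(\mb L)=\langle\mc F(L),\sigma,\mc I(L)\rangle$ for the 2space attached to $\mb L$ by Theorem~\ref{thm: toprepr} (so $L=X^{*}$ and $\sigma(\{F:a\in F\})=\{I:a\notin I\}$ for $a\in L$), I would construct a pair $(\psi,\chi)$ of Fspace isomorphisms $\psi:X\to\mc F(L)$ and $\chi:Y\to\mc I(L)$ making the square of Definition~\ref{def:2sp morph} commute; since both components are invertible, the inverse pair is again a 2space morphism, so $(\psi,\chi)$ is an isomorphism of 2spaces. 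Morally, $X$ and $Y$ \emph{are} the filter and the ideal space of $\mb L$, and $\rho$ \emph{is} the canonical connecting map $\sigma$, once these identifications are installed.

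For the filter side I would note that a subset of $\mb L$ is a filter of the distributive bisemilattice $\mb L$ precisely when it is a filter of the meet semilattice $\langle X^{*},\cap\rangle$, since filterhood refers only to $\leqnd$ and $\land=\cap$; hence $\mc F(L)$, together with its subbasis, coincides with the filter space of $\langle X^{*},\cap\rangle$. Then \cite[Prop.~2.11]{HD97} supplies the canonical Fspace isomorphism $\psi:X\to\mc F(L)$, $\psi(x)=\{a\in X^{*}:x\in a\}$, whose inverse $\psi^{*}=\psi^{-1}$ carries the subbasic clopen $\{F\in\mc F(L):a\in F\}$ to $a$ for every $a\in X^{*}$.

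The ideal side is the genuinely delicate point. From the definition of $+$ and the bijectivity of $\rho$ one obtains $a\leqr b\iff\rho(a)\subseteq\rho(b)$ and $\rho(a+b)=\rho(a)\cup\rho(b)$, so $\rho$ is an isomorphism of join semilattices $\langle X^{*},+\rangle\to\langle\overline Y^{*},\cup\rangle$; and complementation in $Y$ is an order-reversing bijection $\overline Y^{*}\to Y^{*}$ exchanging $\cup$ with $\cap$. An ideal of $\mb L$ is a non-void $\leqr$-down-set of $X^{*}$ closed under $+$; carrying it along $\rho$ and then along complementation turns it into a non-void up-set of $\langle Y^{*},\subseteq\rangle$ closed under $\cap$, i.e.\ a filter of the meet semilattice $Y^{*}$. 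This yields an order isomorphism $\langle\mc I(L),\subseteq\rangle\cong\langle\mc F(Y^{*}),\subseteq\rangle$ under which $\{I\in\mc I(L):a\in I\}$ corresponds to $\{G\in\mc F(Y^{*}):\overline{\rho(a)}\in G\}$ and $\{I:a\notin I\}$ to its complement; being a bijection of the generating subbases that preserves the $(\cdot)^{*}$-structure, it is an isomorphism of Fspaces. Precomposing with the canonical isomorphism $Y\cong\mc F(Y^{*})$ of \cite[Prop.~2.11]{HD97} produces the desired Fspace isomorphism $\chi:Y\to\mc I(L)$, explicitly $\chi(y)=\{a\in X^{*}:y\notin\rho(a)\}$; the displayed equalities make it immediate that each $\chi(y)$ really is an ideal of $\mb L$.

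Finally I would verify that the square of Definition~\ref{def:2sp morph} commutes, which — since every map in sight is determined by its action on subbasic clopens — reduces to checking it on generators: for $a\in X^{*}$, $\sigma$ sends $\{F:a\in F\}$ to $\{I:a\notin I\}$, then $\chi^{*}$ sends $\{I:a\notin I\}$ to $\rho(a)$ (because $\chi^{-1}(\{I:a\in I\})=\overline{\rho(a)}$), and $\rho^{-1}$ sends $\rho(a)$ back to $a=\psi^{*}(\{F:a\in F\})$. Hence $\psi^{*}=\rho^{-1}\circ\chi^{*}\circ\sigma$, so $(\psi,\chi)$ is a 2space morphism with Fspace-isomorphism components, hence a 2space isomorphism, giving $\langle X,\rho,Y\rangle\cong S(\mb L)$. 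I expect the main obstacle to be exactly the ideal side: $Y$ is presented as a bare Fspace, and identifying it with $\mc I(\mb L)$ is not automatic — it has to be routed through $\rho$ and through complementation in $Y$; everything else is an application of \cite[Prop.~2.11]{HD97} together with unwinding the definitions.
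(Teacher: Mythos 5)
Your proposal is correct and follows essentially the same route as the paper: the paper also builds the bisemilattice from the subbasic data (working with the index set $A$ ordered by $X_a\subseteq X_b$ and by the order induced from $Y$, which is just your $\mb L=\langle X^{*},+,\cdot\rangle$ transported along $a\mapsto X_a$) and invokes \cite[Prop.~2.11]{HD97} to recognize $X$ and $Y$ as the filter and ideal spaces, with $\rho$ becoming the canonical $\sigma$. The only difference is one of detail: the paper compresses the ideal-side identification and the commutativity check into ``one can verify'', whereas you make them explicit via $\chi(y)=\{a\in X^{*}:y\notin\rho(a)\}$ and the computation on subbasic clopens.
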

\begin{proof}
If $\langle X,\rho ,Y\rangle$ is a 2space over a set $Z$, with a subbase indexed by a set $A$, then $X,\,Y$ can be regarded, by virtue of the proof of \cite[Proposition 2.11]{HD97}, as the filter space of the meet semilattice $\langle A, \leq_{\land}\rangle$, where, for $a,b\in A$, $a\leq_{\land}b$ iff $X_{a}\subseteq X_{b}$, and $\langle A, \leq_{\lor}\rangle$, equipped with the order naturally induced by $Y$. Therefore, one can verify that $\langle X,\rho ,Y\rangle$ is the 2space $\langle \mc F(A),\sigma ,\mc I(A)\rangle$ arising from the distributive bisemilattice $\mb A=\langle A,\land, \lor\rangle$, where, clearly, $\sigma(X_{a})=\overline  Y_{a}$.
\end{proof}
Moreover,
\begin{lemma}\label{lem:uniq-morph}
If $(\psi,\chi)$ is a morphism between the 2spaces $S(\mb L)=\langle \mc F(L), \rho, \mc I(L)\rangle$ and $S(\mb M)=\langle \mc F(M), \sigma, \mc I(M)\rangle$, then there is a unique bisemilattice homomorphism $f:\mathbf M\to \mathbf L$ such that $(\psi,\chi)=(S(f),S(f))$ and, for any $a\in M$, $f(a)=b$ iff $\psi^{-1}(X_{a})=Y_{b}$ iff $\chi^{-1}(\overline  X_{a})=\overline  Y_{b}$.
\end{lemma}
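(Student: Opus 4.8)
The plan is to recover $f$ from the first component $\psi$ by means of the Balbes-type representation, to observe that it is a $\mc{DBS}$-homomorphism essentially for free from Lemma~\ref{lem:morph trans}, and then to verify that both components of $S(f)$ reproduce $(\psi,\chi)$. Matching the \emph{second} component will be the only step that genuinely uses the commuting square \eqref{diag1}, and uniqueness will then be immediate.

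First I would fix the isomorphisms $\eta_{\mb M}\colon\mb M\to\langle\mc F(M)^{*},+,\cdot\rangle$, $a\mapsto X_{a}$, and $\eta_{\mb L}\colon\mb L\to\langle\mc F(L)^{*},+,\cdot\rangle$, $b\mapsto X_{b}$, supplied by Theorem~\ref{thm: rpr} and concretely realised in Theorem~\ref{thm: toprepr}; recall that their injectivity is precisely prime-filter separation, Theorem~\ref{thm:fil-sep}. Since $(\psi,\chi)\colon S(\mb L)\to S(\mb M)$ is a $2$space morphism, Lemma~\ref{lem:morph trans} tells us that $\psi^{*}=\psi^{-1}\colon\mc F(M)^{*}\to\mc F(L)^{*}$ is a homomorphism of distributive bisemilattices, so I would simply put
\[
f:=\eta_{\mb L}^{-1}\circ\psi^{*}\circ\eta_{\mb M}\colon\mb M\to\mb L ,
\]
which is a $\mc{DBS}$-homomorphism as a composite of homomorphisms. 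Unwinding the definition, $f$ is characterised by $\psi^{-1}(X_{a})=\psi^{*}(X_{a})=X_{f(a)}$ for every $a\in M$; as $\eta_{\mb L}$ is injective this single equation determines $f(a)$ uniquely, which is the asserted equivalence ``$f(a)=b$ iff $\psi^{-1}(X_{a})=X_{b}$'' (and, on complements, $\psi^{-1}(\overline X_{a})=\overline X_{f(a)}$).

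Next I would check that $S(f)=(\psi,\chi)$. By Lemma~\ref{lem:morph trans 2}, $S(f)=(f^{-1},f^{-1})$ is itself a $2$space morphism $S(\mb L)\to S(\mb M)$, and, read as preimage operators, its first component sends $X_{a}\mapsto X_{f(a)}=\psi^{-1}(X_{a})$ and $\overline X_{a}\mapsto\overline X_{f(a)}=\psi^{-1}(\overline X_{a})$ for all $a\in M$. Because the clopen subbasis $\{X_{a}\}_{a\in M}\cup\{\overline X_{a}\}_{a\in M}$ separates the points of the Stone space $\mc F(M)$ --- two distinct filters differ on some element --- any map into $\mc F(M)$ is determined by the preimages of its subbasic sets, so the first component of $S(f)$ equals $\psi$. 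For the second component, observe that both it and $\chi$ are Fspace morphisms $\mc I(L)\to\mc I(M)$ whose $2$space squares \eqref{diag1} commute with \emph{the same} leg $\psi^{*}$; since $\rho^{-1}$ is a bijection and $\sigma$ maps $\mc F(M)^{*}$ onto $\overline{\mc I(M)}^{*}$ with $\sigma(X_{a})=\overline Y_{a}$ by \eqref{eq:rho}, cancelling $\rho^{-1}$ and $\sigma$ forces the preimage operators of the second component of $S(f)$ and of $\chi$ to agree on every member of $\overline{\mc I(M)}^{*}$, hence, taking complements, on the whole point-separating subbasis $\{Y_{a}\}_{a\in M}\cup\{\overline Y_{a}\}_{a\in M}$ of $\mc I(M)$. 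Thus the second component of $S(f)$ is $\chi$, so $S(f)=(\psi,\chi)$, and the relation $\chi^{-1}(\overline Y_{a})=\overline Y_{f(a)}$ read off along the way is exactly the remaining equivalence ``$f(a)=b$ iff $\chi^{-1}(\overline Y_{a})=\overline Y_{b}$''.

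For uniqueness, if $g\colon\mb M\to\mb L$ is any $\mc{DBS}$-homomorphism with $(S(g),S(g))=(\psi,\chi)$, comparing first components shows that $g^{-1}$ and $f^{-1}$ induce the same map $\mc F(L)\to\mc F(M)$, so applying the corresponding preimage operators to $X_{a}$ yields $X_{g(a)}=\psi^{-1}(X_{a})=X_{f(a)}$, whence $g(a)=f(a)$ for all $a\in M$ by injectivity of $\eta_{\mb L}$, i.e.\ $g=f$. The one step that is not pure bookkeeping is matching the second components: there one cannot reason coordinatewise and must use the commutativity of \eqref{diag1}, that is, the hypothesis that $(\psi,\chi)$ is a $2$space morphism and not merely a pair of Fspace morphisms. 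Everything else relies on the representation isomorphisms, Lemma~\ref{lem:morph trans}, and the elementary fact that a map into a Stone space is determined by the preimages of a point-separating subbasis; the only subtlety to keep track of is the contravariance of $S$.
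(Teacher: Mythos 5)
Your proof is correct, and it organizes the argument a little differently from the paper. The paper proceeds symmetrically: it extracts from $\psi$ and from $\chi$ two index maps (via Lemma \ref{lem:methom}, a meet- and a join-semilattice homomorphism, respectively), shows by total separation that each component of the morphism is determined by its index map, and then uses the commutative square \eqref{diag1} once, to prove that the two index maps coincide, so that they assemble into a single bisemilattice homomorphism $f$ with $(\psi,\chi)=(S(f),S(f))$. You instead define $f$ from $\psi$ alone, importing preservation of $+$ wholesale from Lemma \ref{lem:morph trans} (where the square has already been used), and then spend the square a second time --- together with bijectivity of $\rho$ and $\sigma$, Lemma \ref{lem:morph trans 2}, and point-separation of the ideal-space subbasis --- to show that the second component of $S(f)$ is $\chi$. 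Both arguments rest on the same ingredients (the representation $a\mapsto X_{a}$ with injectivity from Theorem \ref{thm:fil-sep} and its dual, determination of a map into a totally separated space by subbasic preimages, and the commuting square), so this is a variant rather than a new method; what yours buys is that it makes transparent that $f$ is already determined by the filter-space component and that $\chi$ carries no independent information, whereas the paper's version treats the two components on an equal footing and reads the join-semilattice behaviour of $f$ off $\chi$ directly. One cosmetic caveat: for the backward implication ``$\chi^{-1}(\overline X_{a})=\overline Y_{b}$ implies $f(a)=b$'' you should also invoke injectivity of $b\mapsto\overline Y_{b}$ (prime-ideal separation), exactly as you did on the filter side; this is immediate but worth stating.
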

\begin{proof}
By virtue of Theorem \ref{thm: oneway}, we can assume, without loss of generality, that the 2spaces are of the form $S(\mb L)=\langle \mc F(L), \rho, \mc I(L)\rangle$ and $S(\mb M)=\langle \mc F(M), \sigma, \mc I(M)\rangle$, where $L, M$ are the index sets for the subbases of the pairs of spaces $(\mc F(L), \mc I(L))$ and $(\mc F(M), \mc I(M))$, respectively. Set, for any $a\in M$, $\psi^{*}(a)=b$ iff $\psi^{-1}(X_{a})=Y_{b}$, and $\chi^{*}(a)=b$ iff $\chi^{-1}(\overline  X_{a})=\overline  Y_{b}$.
By Lemma \ref{lem:methom}, both $\psi^{*}$ and $\chi^{*}$ are semilattice homomorphisms. We now show uniqueness. Suppose, by way of contradiction, that $\psi\not=S({\psi^{*}})={\psi^{*}}^{-1}$. By total separation, for some index $a$, and $y$ in the space $\mc F(L)$, $S(\psi^{*}(y))={\psi^{*}}^{-1}(y)\in X_{a}$ and $\psi(y)\not\in X_{a}$. Then, $a\in {\psi^{*}}^{-1}(y)$, and so ${\psi^{*}}(a)\in y$. 
If ${\psi^{-1}}(X_{a})=Y_{b}$, set ${\psi^{*}}(a)=b$, by the definition of ${\psi^{*}}$.
Let us observe that, because ${\psi^{-1}}(X_{a})=Y_{b}$, we have that $y\in Y_{b}$. Therefore, $y\in \psi^{-1}(X_{a})$, which implies that $\psi(y)\in X_{a}$. A contradiction. Applying the same reasoning one can show that $\chi$ is uniquely determined by a semilattice homomorphism $g$. Now, by diagram \ref{diag1}, $Y_b=\psi^{-1}(X_{a})=\rho^{-1}(\chi^{-1}(\sigma(X_{a})))=\rho^{-1}(\chi^{-1}(\overline X_{a}))$. This implies that $\overline Y_{b}=\rho(Y_{b})=\rho(\rho^{-1}(\chi^{-1}(\overline X_{a})))=\chi^{-1}(\overline X_{a})$. This means that $f$ is equal to $g$. Therefore the 2space morphism $(\psi,\chi)$ is of the form $(S(f),S(f))$, with $f$ an homomorphism between the bisemilattices $\mb M$ and $\mb L$.
\end{proof}

Summarizing what we have so far we can state, as a corollary, the result announced for this section:
\begin{corollary}\label{cor:dual}
The categories of distributive bisemilattices and 2spaces, with respective morphisms, are dual. 
\end{corollary}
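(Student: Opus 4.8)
The plan is to bundle the preceding results into a pair of contravariant functors between $\mc{DBS}$ and the category of $2$spaces, and then to appeal to the standard categorical criterion for a dual equivalence. First I would define a functor $S$ from $\mc{DBS}$ to the category of $2$spaces: on objects by $\mb L\mapsto S(\mb L)=\langle\mc F(L),\rho,\mc I(L)\rangle$ (Theorem \ref{thm: toprepr}) and on arrows by $f\mapsto S(f)=(f^{-1},f^{-1})$ (Lemma \ref{lem:morph trans 2}); and, going back, a functor $E$: on objects by $\langle X,\rho,Y\rangle\mapsto\langle X^{*},+,\cdot\rangle$ (Lemma \ref{lem:2sp-bsem}) and on arrows by $(\psi,\chi)\mapsto\psi^{*}$ (Lemma \ref{lem:morph trans}). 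Checking that each of these is a functor is routine and carries no real content: identities go to identities, and since taking preimages reverses the order of composition, both $S$ and $E$ are contravariant; for $E$ one uses in addition that $2$space morphisms are composed componentwise (Definition \ref{def:2sp morph}).

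It then suffices to verify that the contravariant functor $S$ is full, faithful and essentially surjective. Essential surjectivity is exactly Theorem \ref{thm: oneway}: every $2$space is isomorphic to $S(\mb A)$, with $\mb A$ the distributive bisemilattice supported by its index set. Fullness and faithfulness together are the content of Lemma \ref{lem:uniq-morph}, which says that every $2$space morphism between $2$spaces of the form $S(-)$ is $S(f)$ for one, and only one, bisemilattice homomorphism $f$ (existence gives fullness, and uniqueness gives faithfulness --- if $S(f)=S(g)$ then $f=g$). A morphism between arbitrary $2$spaces is first transported along the isomorphisms produced by Theorem \ref{thm: oneway}, so as to fall under the hypotheses of Lemma \ref{lem:uniq-morph}; this step is harmless, isomorphisms being invertible in any category. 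By the standard fact, a full, faithful, essentially surjective contravariant functor is one half of a dual equivalence, whose other half is necessarily (naturally isomorphic to) $E$: the counit at $\mb L$ is Balbes' representation isomorphism $\mb L\xrightarrow{\sim}\mc F(L)^{*}=E(S(\mb L))$, $a\mapsto\upr a=X_a$ (Balbes \cite[Thm. 4]{Ba70}; cf.\ the proof of Theorem \ref{thm: rpr}), and the unit at $\langle X,\rho,Y\rangle$ is the isomorphism furnished by Theorem \ref{thm: oneway}. This is the asserted duality.

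I do not expect any mathematically hard step: the substance --- that the object and arrow assignments land in the right categories, and that the two arrow assignments are mutually inverse --- has already been discharged by Lemmas \ref{lem:2sp-bsem}, \ref{lem:morph trans}, \ref{lem:morph trans 2} and \ref{lem:uniq-morph}, so the corollary is in effect an assembly job. The one place calling for care is bookkeeping rather than mathematics: tracking the contravariance consistently through Definition \ref{def:2sp morph} and Lemmas \ref{lem:morph trans} and \ref{lem:morph trans 2}, and using the identification in Theorem \ref{thm: oneway} (``a $2$space \emph{is} $S(\mb A)$'') only up to isomorphism when passing from an arbitrary $2$space morphism to the hypotheses of Lemma \ref{lem:uniq-morph}.
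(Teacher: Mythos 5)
Your proposal is correct and follows essentially the same route as the paper, which states the corollary simply as a summary of the preceding results (Theorem \ref{thm: toprepr}, Lemmas \ref{lem:2sp-bsem}, \ref{lem:morph trans}, \ref{lem:morph trans 2}, Theorem \ref{thm: oneway}, and Lemma \ref{lem:uniq-morph}); your assembly of these into the contravariant functors $S$ and $E$, with fullness/faithfulness from Lemma \ref{lem:uniq-morph}, essential surjectivity from Theorem \ref{thm: oneway}, and the Balbes-type isomorphism $a\mapsto\upr a$ as the natural isomorphism on the algebraic side, is exactly the intended argument, just spelled out explicitly.
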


\section{Stone-type representations for $\mc{DDBS}$ and $\mc{IDBS}$}\label{sec:demor}
In this section, we enrich the notion of 2space and introduce the concept of 2space$^{\star}$
to extend the duality, established in section \ref{sec:top}, to the categories of De Morgan and involutive bisemilattices. We first notice that what we discuss in this section present evident similarities with the ideas in \cite[section 3]{HD93}.\footnote{The author was not aware of these results, and gratefully acknowledges an anonymous referee for calling his attention on this fact.} Indeed, a De Morgan operator on a bisemilattice $\mb A$ is an antitone Galois connection, in fact a dual equivalence, between the semilattice reducts. Therefore, implicitly, the ideas to cover case of bisemilattices with a De Morgan operator are sketched in \cite[section 3]{HD93}.
However, in order to complete the picture on the theme of this work (\emph{distributive} bisemilattices, and expansions thereof), we believe that it could be of some interest to elaborate on the ideas and techniques that we have developed in section \ref{sec:top} to cover the case of $\mc{DDBS}$, and then apply these results to the class of $\mc{IDBS}$.

It will be clear soon that our strategy will be, unimaginatively, along the lines of our Balbes-type representations of distributive De Morgan and involutive bisemilattices (cf. Theorem \ref{thm: rpr} and Theorem \ref{thm: rpr1}).

\begin{definition}\label{def:2sp*}
A \emph{\tsp} on an index set $A$ is a structure $\langle X,\rho,^\star, Y\rangle$, where $\langle X,\rho,Y\rangle$ is a 2space, the map $^\star:\overline  Y^{*}\to X^{*}$ is an order dual isomorphism such that $^{\star}\circ \rho=\rho^{-1}\circ{^{\star}}^{-1}$, and $\overline  Y^{*}$ possesses a least element $\overline0$.
\end{definition}
{The notion of \tsp is designed to capture the antitone Galois connection between the filter / ideal spaces of a De Morgan (distributive) bisemilattice. Indeed, the pair of mappings $(^{\star},^{\star^{-1}})$ reflects the fact that, in a De Morgan (distributive) bisemilattice, $x\leq_{\land} y$ iff $y'\leq_{\lor} x'$. Nevertheless, it is perhaps worth observing that, in general, it is not the case that $x\leq_{\land} y$ iff $y\leq_{\lor} x$. This is a major difference between bisemilattices (with involution) and lattices, in the latter case the filter / ideal spaces dually reflect the same order.
In this section, we mix the notion of Fspace and antitone Galois connection between Fspaces (cf. \cite[section 3]{HD93}), together with our Balbes' type representation of $\mc{DDBS}$ in order to obtain a full duality. More precisely, one may think of a \tsp as a $\bot$-frame \cite{HD93} enriched with a correspondence $\rho$, which we use, also in the case of $\mc{DDBS}$, to ``keep track'' of distributivity.
}

Taking advatage of Definition \ref{def:2sp*}, and the full strength of Theorem \ref{thm: rpr} we readily have that to any 2space$^{\star}$ is associated an algebra in $\mc{DDBS}$.
\begin{lemma}\label{lem:2sp-demor}
 If $\langle X,\rho, ^{\star},Y\rangle$ is a \tsp, the structure $\langle X^{*}, +,\cdot,\dgr,\bot,\top\rangle$ is a De Morgan bisemilattice, where the reduct $\langle X^{*}, +,\cdot,\rangle$ is as in Lemma \ref{lem:2sp-bsem}, and for $A\in X^{*}$
\begin{eqnarray*}
A\dgr&=&(\rho{(A)})\cmp;\\
\bot&=&\rho^{-1}(\overline0);\\
\top&=&\overline0^\star.
\end{eqnarray*}
\end{lemma}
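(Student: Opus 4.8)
The plan is to recognize that the data of a \tsp are exactly an instance of the set-theoretic situation set up on page \pageref{operations} preceding Lemma \ref{lem:suggerito1}, so that the lemma follows by transcribing the proof of Lemma \ref{lem:suggerito1} (together with Lemma \ref{lem:2sp-bsem}) with $\theta$ replaced by $\rho$. Concretely, I would first record the dictionary: the family $X^{*}$ plays the role of Balbes' family $X$, and it is closed under finite intersections by Definition \ref{def:mFsp}(4); the family $\overline Y^{*}$ plays the role of $Y$, it is closed under finite unions (by the remark following Definition \ref{def:mFsp}), and it possesses a least element $\overline 0$ by Definition \ref{def:2sp*}; the map $\rho\colon X^{*}\to\overline Y^{*}$ is a bijection satisfying \eqref{eq:bal1} and \eqref{eq:un} by Definition \ref{def:2sp}(2); and $^{\star}\colon\overline Y^{*}\to X^{*}$ is an order-dual isomorphism with $^{\star}\circ\rho=\rho^{-1}\circ{^{\star}}^{-1}$ by Definition \ref{def:2sp*}. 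Under this dictionary the operations $+,\cdot,\dgr,\bot,\top$ displayed in the statement are literally the operations of a ``{\rm DDBS} of sets''.

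Next I would invoke Lemma \ref{lem:2sp-bsem} to get that $\langle X^{*},+,\cdot\rangle$ is a distributive bisemilattice (this already uses Definition \ref{def:2sp}(2) to see that $X^{*}$ is closed under $+$), and then run the three verifications of Lemma \ref{lem:suggerito1} verbatim: (i) $\dgr$ is an involution because $^{\star}\circ\rho=\rho^{-1}\circ{^{\star}}^{-1}$; (ii) $\bot\dgr=(\rho\rho^{-1}(\overline 0))^{\star}=\overline 0^{\star}=\top$, whence, since $^{\star}$ is order-dual and $\overline 0$ is least in $\overline Y^{*}$, $\top$ is the greatest element of $X^{*}$, giving $A\cdot\top=A\cap\top=A$, while $A+\bot=\rho^{-1}(\rho(A)\cup\rho\rho^{-1}(\overline 0))=\rho^{-1}(\rho(A)\cup\overline 0)=\rho^{-1}(\rho(A))=A$; (iii) the two De Morgan identities $(A+B)\dgr=A\dgr\cdot B\dgr$ and $(A\cdot B)\dgr=A\dgr+B\dgr$, obtained exactly as in the displayed computations of Lemma \ref{lem:suggerito1}, using that $^{\star}$ is an order-dual isomorphism (so it turns unions on $\overline Y^{*}$ into intersections on $X^{*}$) together with the coherence relation between $\rho$ and $^{\star}$.

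I do not expect a genuine obstacle here: the only thing to be careful about is the bookkeeping of the dictionary, i.e.\ making sure each clause in the definition of \tsp and of Fspace is used to supply exactly the corresponding hypothesis of Lemma \ref{lem:suggerito1} (closure of $X^{*}$ under $\cap$, closure of $\overline Y^{*}$ under $\cup$, existence of $\overline 0$, bijectivity of $\rho$, the Balbes conditions \eqref{eq:bal1}--\eqref{eq:un}, and the order-dual isomorphism $^{\star}$ with its coherence condition). Once that is in place the argument is a pure transcription, and in the write-up I would simply say that the structure in question is a {\rm DDBS} of sets, so Theorem \ref{thm: rpr}'s underlying Lemma \ref{lem:suggerito1} applies directly, rather than reproducing the calculations.
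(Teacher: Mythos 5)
Your proposal is correct and matches the paper's own argument: the paper likewise obtains the bisemilattice reduct from Lemma \ref{lem:2sp-bsem} and the involution, bounds, and De Morgan identities by appealing to the earlier set-theoretic computations (Lemma \ref{lem:suggerito1}, as used in the first part of the proof of Theorem \ref{thm: rpr}), with $\rho$ and $^{\star}$ in place of $\theta$ and Balbes' $^{\star}$. Your explicit dictionary between the \tsp{} data and the hypotheses of Lemma \ref{lem:suggerito1} is exactly the intended reduction.
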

\begin{proof}
The fact that the reduct $\langle X^{*}, +,\cdot,\rangle$ is a distributive bisemilattice follows from Lemma \ref{lem:2sp-bsem}. That the structure $\langle X^{*}, +,\cdot,\dgr,\bot,\top\rangle$ is De Morgan is a consequence of the first part of the proof of Theorem \ref{thm: rpr}. 
\end{proof}
Following the pattern of section \ref{sec:top}, we now present a converse of Lemma \ref{lem:2sp-demor}, that allows to associate to any $\mb L$ in $\mc{DDBS}$ a 2space$^{\star}$.

Let $\mathbf L$ be a De Morgan bisemilattice, and recall that
\[
\mathcal S= \{X_a\}_{a\in L} \cup  \{\overline  X_a\}_{a\in L}\mbox{ and }\mathcal P= \{Y_a\}_{a\in L} \cup  \{\overline  Y_a\}_{a\in L},
\]
are the subbases for $X$ and $Y$, respectively, and $\rho(X_a)=\overline  Y_{a}$ is as in Theorem \ref{thm: toprepr}. Upon setting, for $\overline {Y_{a}}\in \mathcal P$, $(\overline {Y_{a}})^{\star}=X_{a'}$, and $\overline0=\overline {Y_{0}}$, the following theorem holds:

\begin{theorem}\label{thm: toprepr2}
The structure $S(\mathbf L)=\langle X,\rho, ^{\star},Y\rangle$ is a \tsp.
\end{theorem}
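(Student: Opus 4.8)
The plan is to build on what has already been proved rather than to redo anything from scratch. Since $\mc{DDBS}\subseteq\mc{DBS}$, the triple $\langle X,\rho,Y\rangle$ underlying $S(\mathbf L)$ is, by Theorem~\ref{thm: toprepr}, already a $2$space: $X=\mc F(\mathbf L)$ and $Y=\mc I(\mathbf L)$ are Fspaces with subbases $\mathcal S,\mathcal P$ indexed by $L$, and $\rho(X_a)=\overline Y_a$ is a bijection $X^{*}\to\overline Y^{*}$ satisfying \eqref{eq:bal1}, \eqref{eq:un} and \eqref{eq:rho}. Hence, to meet Definition~\ref{def:2sp*} it remains only to check three things about the newly added datum $^{\star}:\overline Y^{*}\to X^{*}$, $(\overline Y_a)^{\star}=X_{a'}$: that $\overline Y^{*}$ has a least element $\overline 0=\overline Y_0$; that $^{\star}$ is an order-dual isomorphism; and that $^{\star}\circ\rho=\rho^{-1}\circ{^{\star}}^{-1}$.

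For the least element I would observe that $\overline Y_0=\{I\in\mc I(\mathbf L):0\notin I\}=\emptyset$: every ideal is nonvoid and downward closed for $\leq_{\lor}$, and $0\leq_{\lor}a$ for all $a$ by boundedness, so $0$ lies in every ideal; since $\emptyset\subseteq\overline Y_a$ for every $a\in L$, it is the least element of $\overline Y^{*}$. For $^{\star}$ itself, well-definedness and bijectivity reduce to the fact that $'$ is an involution on $L$ together with the prime filter/ideal separation Theorem~\ref{thm:fil-sep}, which makes $a\mapsto X_a$ and $a\mapsto\overline Y_a$ injective: from $\overline Y_a=\overline Y_b$ one gets $a=b$, hence $X_{a'}=X_{b'}$, and any $X_b$ equals $(\overline Y_{b'})^{\star}$. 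That $^{\star}$ reverses inclusions --- in fact that it carries the join-semilattice $\langle\overline Y^{*},\cup\rangle$ isomorphically onto the meet-semilattice $\langle X^{*},\cap\rangle$ --- is exactly the De Morgan computation from the first part of the proof of Theorem~\ref{thm: rpr}: using $\overline Y_a\cup\overline Y_b=\overline Y_{a\lor b}$, the identity $(a\lor b)'=a'\land b'$ and $X_{a'}\cap X_{b'}=X_{a'\land b'}$, one obtains $(\overline Y_a\cup\overline Y_b)^{\star}=(\overline Y_a)^{\star}\cap(\overline Y_b)^{\star}$, and injectivity of $^{\star}$ upgrades this to the equivalence $A\subseteq B\iff A^{\star}\supseteq B^{\star}$.

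Finally, for the compatibility $^{\star}\circ\rho=\rho^{-1}\circ{^{\star}}^{-1}$ I would simply evaluate both sides on a generator $X_a$: the left side is $(\rho(X_a))^{\star}=(\overline Y_a)^{\star}=X_{a'}$, while on the right ${^{\star}}^{-1}(X_a)=\overline Y_{a'}$ because $(\overline Y_{a'})^{\star}=X_{a''}=X_a$ (here $a''=a$), so $\rho^{-1}({^{\star}}^{-1}(X_a))=\rho^{-1}(\overline Y_{a'})=X_{a'}$ as well; this is nothing but the chain of equalities ending in $\upr x''=\upr x$ in the proof of Theorem~\ref{thm: rpr}. I do not anticipate a genuine obstacle here: the whole statement is a transcription of the Balbes-type representation of $\mc{DDBS}$ (Theorem~\ref{thm: rpr}) into the Fspace language of Definition~\ref{def:2sp*}. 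The only point demanding a word of care is passing from equalities of subbasic clopen sets back to equalities of elements of $L$, where the prime filter/ideal separation theorem is indispensable.
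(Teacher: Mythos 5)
Your proposal is correct and follows essentially the same route as the paper: it invokes Theorem \ref{thm: toprepr} for the underlying 2space, checks bijectivity of $^{\star}$ via the involution $'$ (with the separation theorem guaranteeing well-definedness), verifies order-duality by the computation $(\overline Y_a\cup\overline Y_b)^{\star}=X_{(a\lor b)'}=X_{a'}\cap X_{b'}$, obtains the least element $\overline Y_0$ from the fact that every ideal contains $0$, and confirms $^{\star}\circ\rho=\rho^{-1}\circ{^{\star}}^{-1}$ by evaluating both sides on the sets $X_a$. The extra details you supply (injectivity of $a\mapsto\overline Y_a$ and the use of $a''=a$) only make explicit what the paper leaves implicit.
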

\begin{proof}
The fact that $\langle X,\rho, Y\rangle$ is a 2space derives from Theorem \ref{thm: toprepr}. Furthermore, the mapping $^{\star}:\overline  Y^{*}\to X^{*}$, defined, for all $a\in L$, by $\ov{Y_{a}}^{\star}=X_{a'}$ is bijective because the operation $'$ is an involution. Moreover, $(\ov{Y_{a}}\cup \ov{Y_{b}})^{\star}=\ov{Y_{a\lor b}}^{\star}=X_{(a\lor b)'}=X_{a'\land b'}=X_{a'}\cap X_{b'}$, which shows that $^{\star}$ is order dual. The fact that $\ov {Y_{0}}$ is the least element of $\ov{Y^{*}}$ follows from (the dual of) Definition \ref{def:fil.id}: any ideal is a non-void subset of $L$ that, consequently, contains the element $0$. Finally, $(\rho(X_{a}))^{\star}=\ov{Y_{a}}^{\star}=X_{a'}=\rho^{-1}(\ov{Y_{a'}})=\rho^{-1}(X_{a'}^{\star^{-1}})$.
\end{proof}
With a slight, but harmless, conflict of notation with section \ref{sec:top}, if $\mb L$ is a De Morgan bisemilattice, we will denote (within this section) by $S(\mb L)$ the 2space$^{\star}$ associated to $\mb L$ (see Theorem \ref{thm: toprepr2}).\\

We have introduced in section \ref{sec:top} (precisely, on page \pageref{def:2sp morph}) the notion of 2space morphism. To properly match with the context of 2spaces$^{\star}$, in the following definition, we expand this notion with a natural additional requirement on the operation $^{\star}$.

\begin{definition}\label{def:2mor*}
Let $\langle X,\rho, ^{\star_{Y}},Y\rangle$ and $\langle Z,\sigma,^{\star_{W}}, W\rangle$ be 2spaces$^{\star}$. A \emph{\tsp-morphism} (briefly, a \emph{morphism}) $(\psi,\chi)$ is a 2space morphism (see Definition \ref{def:2sp morph}) that satisfies the following further condition, for all $A\in Z^{*}$:
\begin{equation}\label{cond:star}
\chi^{-1}(A{^{\star_{W}}}^{-1})=(\psi^{-1}(A)){^{\star_{Y}}}^{-1}\tag{$\star$},
\end{equation}
or, equivalently, the next diagram commutes:
\begin{center}
 \begin{equation}\label{diag-star}
  \xymatrix{
  {X^{*}}&&\ar@{<-}^{\star_{Y}^{-1}}[ll]\overline  Y^{*}\\\\
Z^{*}\ar@{->}^{\psi^{-1}=\psi^{*}}[uu]\ar@{->}^{\star_{W}^{-1}}[rr]&&\overline  W^{*}\ar@{->}^{\chi^{-1}=\chi^{*}}[uu]
}
\end{equation}
\end{center}

\end{definition}

Using Definition \ref{def:2mor*} we can show that:
\begin{lemma}\label{lem:morph trans3}
If $\langle X,\rho, ^{\star_{Y}},Y\rangle$ and $\langle Z,\sigma,^{\star_{W}}, W\rangle$ are 2spaces$^{\star}$ and $(\psi,\chi)$ a morphism between them, then $\psi^{*}=\psi^{-1}:Z^{*}\to X^{*}$ is a De Morgan bisemilattice homomorphism.
\end{lemma}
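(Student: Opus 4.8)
The plan is to build directly on Lemma~\ref{lem:morph trans}. That lemma already shows $\psi^{*}=\psi^{-1}\colon Z^{*}\to X^{*}$ preserves $\cdot$ and $+$, so it is a homomorphism of the distributive bisemilattice reducts; hence the whole remaining task is to check that $\psi^{*}$ commutes with the involution $\dgr$ and sends the constants $\bot,\top$ of $Z^{*}$ to those of $X^{*}$. The two tools I would use are the commuting diagrams attached to a \tsp-morphism: diagram~\eqref{diag1}, which (exactly as in the last display of the proof of Lemma~\ref{lem:morph trans}) yields $\rho\circ\psi^{*}=\chi^{*}\circ\sigma$ on $Z^{*}$, and diagram~\eqref{diag-star}, which yields $\chi^{*}(A^{\star_{W}^{-1}})=(\psi^{*}(A))^{\star_{Y}^{-1}}$ for every $A\in Z^{*}$.

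For the involution, recall from Lemma~\ref{lem:2sp-demor} that $A\dgr=(\rho(A))^{\star_{Y}}$ in $X^{*}$, while $A\dgr=(\sigma(A))^{\star_{W}}$ in $Z^{*}$. Fix $A\in Z^{*}$. On one hand, by diagram~\eqref{diag1}, $(\psi^{*}(A))\dgr=(\rho(\psi^{*}(A)))^{\star_{Y}}=(\chi^{*}(\sigma(A)))^{\star_{Y}}$. On the other hand, since $(A\dgr)^{\star_{W}^{-1}}=\sigma(A)$, diagram~\eqref{diag-star} applied with $A\dgr$ in place of $A$ gives $\chi^{*}(\sigma(A))=(\psi^{*}(A\dgr))^{\star_{Y}^{-1}}$; applying the map $^{\star_{Y}}$, which is inverse to $^{\star_{Y}^{-1}}$ on $X^{*}$, we get $(\chi^{*}(\sigma(A)))^{\star_{Y}}=\psi^{*}(A\dgr)$. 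Comparing the two computations gives $(\psi^{*}(A))\dgr=\psi^{*}(A\dgr)$, as desired.

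It remains to treat the constants. The element $\top=\overline0^{\star_{Y}}$ is the greatest element of $X^{*}$ (the image of the least element $\overline0$ of $\overline Y^{*}$ under the order-dual isomorphism $^{\star_{Y}}$), which is the whole space; since the set-theoretic map $\psi^{-1}$ sends the whole space of $Z$ to the whole space of $X$, we obtain $\psi^{*}(\top_{Z})=\top_{X}$. For $\bot$, recall from the proof of Lemma~\ref{lem:suggerito1} (applied with $\rho$ in place of $\theta$) that $\bot\dgr=\top$, hence $\top\dgr=\bot$, so that $\psi^{*}(\bot_{Z})=\psi^{*}(\top_{Z}\dgr)=(\psi^{*}(\top_{Z}))\dgr=\top_{X}\dgr=\bot_{X}$ by the preceding paragraph. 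Thus $\psi^{*}$ preserves $\cdot,+,\dgr,\bot,\top$, i.e.\ it is a $\mc{DDBS}$-homomorphism. The only genuinely delicate point is the involution step: one must keep straight the directions of the four bijections $\rho,\sigma,{}^{\star_{Y}},{}^{\star_{W}}$ and their inverses occurring in the two diagrams, after which everything collapses to cancelling $^{\star_{Y}}$ against $^{\star_{Y}^{-1}}$; the rest is a short transcription of computations already carried out for Lemma~\ref{lem:morph trans} and Theorem~\ref{thm: rpr}.
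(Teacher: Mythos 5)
Your proposal is correct and follows essentially the same route as the paper: reduce the $\{+,\cdot\}$ part to Lemma~\ref{lem:morph trans}, then combine the identity $\rho\circ\psi^{*}=\chi^{*}\circ\sigma$ from diagram~\eqref{diag1} with condition~\eqref{cond:star} to get $\psi^{*}(A\dgr)=(\psi^{*}(A))\dgr$, and finally obtain $\bot$-preservation from $\bot=\top\dgr$. The only (harmless) cosmetic difference is the justification of $\top$-preservation, where you argue via the top element being the whole space and $\psi^{-1}$ preserving it, while the paper appeals to $\psi^{*}$ being a meet-semilattice homomorphism; both come to the same thing in the concrete spaces $S(\mb L)$.
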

\begin{proof}
From Lemma \ref{lem:morph trans}, it follows that the mapping $\psi^{*} $ is a bisemilattice homomorphism between the reducts $\langle Z^{*}, +,\cdot,\rangle$ and $\langle X^{*}, +,\cdot\rangle$. As regards $\dgr$, if $A\in Z^{*}$, we have that 
\begin{align*}
\psi^{*}(A\dgr)&=\psi^{-1}((\sigma(A))^{\star_{W}})&\\
&= \rho^{-1}(\chi^{-1}(\sigma\big(((\sigma(A))^{\star_{W}})\big)))&\text{\tiny{(diagram \ref{diag1})}}\\
&= \rho^{-1}(\chi^{-1}(\sigma(\sigma^{-1}(A^{\star_{W}^{-1}}))))& \text{\tiny{(Definition \ref{def:2sp*})}}
\\
&= \rho^{-1}(\chi^{-1}(A^{\star_{W}^{-1}}))&\\
&= \rho^{-1}(\big(\psi^{-1}(A)\big)^{\star_{Y}^{-1}})&\text{\tiny{(condition \eqref{cond:star})}}\\
&=(\rho(\psi^{-1}(A)))^{\star_{Y}}&\text{\tiny{(Definition \ref{def:2sp*})}}\\
&=(\psi^{*}(A))\dgr&
\end{align*}
Finally, for the constants, $\psi^{*}$ preserves $\top$, because, by Lemma \ref{lem:methom}, it is a meet-semilattice homomorphism. Therefore, since $\top\dgr=({\ov0}^{\star})\dgr=(\rho({\ov0}^{\star}))^{\star}=\rho^{-1}(({\ov0}^{\star})^{\star^{-1}})=\rho^{-1}({\ov0})=\bot$, and, by the argument above, $\psi^{*}$ commutes with $\dgr$, we have that $\psi^{*}$ preserves also $\bot$.
\end{proof}
We now discuss a converse of the previous statement:
\begin{lemma}\label{lem:morph trans4}
Let $f:\mathbf L\to \mb M$ be a De Morgan bisemilattice homomorphism and $S(\mb L)$, $S(\mb M)$ the 2spaces$^{\star}$ associated to $\mb L$ and $\mb M$, respectively. Then $S(f)=(f^{-1},f^{-1}): S(\mb M)\to S(\mb L)$ is a \tsp morphism.
\end{lemma}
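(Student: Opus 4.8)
The plan is to reduce the statement almost entirely to Lemma~\ref{lem:morph trans 2}. Since a De Morgan bisemilattice homomorphism $f\colon\mb L\to\mb M$ is in particular a homomorphism of the underlying distributive bisemilattices, Lemma~\ref{lem:morph trans 2} already gives that $S(f)=(f^{-1},f^{-1})$ is a \emph{2space} morphism from $S(\mb M)$ to $S(\mb L)$: both copies of $f^{-1}$ are Fspace morphisms (the relevant details being in \cite[Lemma~2.5]{HD97}) and the square~\eqref{diag1} commutes. Hence the only thing that still needs to be checked is the additional clause in Definition~\ref{def:2mor*}, namely condition~\eqref{cond:star}, or equivalently the commutativity of diagram~\eqref{diag-star} relating the two $^{\star}$ maps.

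First I would make the maps explicit. By Theorem~\ref{thm: toprepr2}, the order-dual isomorphism attached to $S(\mb L)$ is given by $(\overline{Y_{a}})^{\star}=X_{a'}$ for $a\in L$; since the assignment $a\mapsto X_a$ is injective by prime filter separation (Theorem~\ref{thm:fil-sep}) and $'$ is an involution, the inverse isomorphism sends $X_b$ to $\overline{Y_{b'}}$. The same description holds for $S(\mb M)$. I would also record that every element of $\mc F(L)^{*}$ is of the form $X_a$ for some $a\in L$, so it suffices to verify~\eqref{cond:star} on these.

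The verification is then a one-line computation. Fix $a\in L$ and write $\psi=\chi=f^{-1}$. The left-hand side of~\eqref{cond:star} is obtained by first applying the inverse of $^{\star}$ to $X_a$, which gives $\overline{Y_{a'}}$, and then the preimage map $\chi^{-1}$; recalling from the proof of Lemma~\ref{lem:morph trans 2} that $S(f)^{-1}$ sends $\overline{Y_{c}}$ to $\overline{Y_{f(c)}}$, this yields $\overline{Y_{f(a')}}$. The right-hand side is $\psi^{-1}(X_a)=X_{f(a)}$ followed by the inverse of $^{\star}$, which gives $\overline{Y_{f(a)'}}$. The two coincide precisely because $f$, being a \emph{De Morgan} homomorphism, preserves $'$, so $f(a')=f(a)'$. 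Thus~\eqref{cond:star} holds and $S(f)$ is a \tsp morphism.

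The only real difficulty here is bookkeeping: there are four Fspaces in play, and one must keep track of which one each subbasic set lives in, and of the fact that $^{\star}$ runs $\overline{Y}^{*}\to X^{*}$ while its inverse runs the other way, so that the arrows of diagram~\eqref{diag-star} are matched in the correct direction. Beyond the observation that a De Morgan homomorphism commutes with the involution, there is no substantive mathematical obstacle.
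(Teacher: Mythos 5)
Your proposal is correct and follows essentially the same route as the paper: reduce to Lemma \ref{lem:morph trans 2} so that only condition \eqref{cond:star} remains, then verify it on the subbasic sets $X_a$ via the computation $\chi^{-1}((X_a)^{\star^{-1}})=\chi^{-1}(\overline{Y_{a'}})=\overline{Y_{f(a')}}=\overline{Y_{f(a)'}}=(X_{f(a)})^{\star^{-1}}=(\psi^{-1}(X_a))^{\star^{-1}}$, using that $f$ preserves the involution. This is exactly the paper's argument, with only the added (harmless) bookkeeping remarks about the explicit form of $^{\star}$ and its inverse.
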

\begin{proof}
By virtue of Lemma \ref{lem:morph trans 2}, we only need to prove that condition \eqref{cond:star} is satisfied. Indeed, for $a\in L$, we have that 
\begin{eqnarray*}
\chi^{-1}((X_{a})^{\star^{-1}})&=&\chi^{-1}(\overline{Y_{a'}})\\
&=&\overline{Y_{f(a')}}\\
&=&\overline{Y_{f(a)'}}\\
&=&(X_{f(a)'})^{\star^{-1}}\\
&=&(\psi^{-1}(X_{a}))^{\star^{-1}}
\end{eqnarray*}
\end{proof}

From the results we have discussed so far in this section, together Theorem \ref{thm: oneway}, we obtain the following theorem:
\begin{theorem}\label{thm: oneway1}
Every 2space$^{\star}$ is of the form $S(\mb L)$, for a De Morgan bisemilattice $\mb L$.
\end{theorem}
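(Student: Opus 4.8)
The plan is to mimic the proof of Theorem \ref{thm: oneway}, which handled the case of plain 2spaces, and then add the bookkeeping needed to recover the involution operation. So I would start with an arbitrary 2space$^{\star}$ $\langle X,\rho,^{\star},Y\rangle$ on an index set $A$. By Theorem \ref{thm: oneway} (which rests on \cite[Proposition 2.11]{HD97}), forgetting the $^{\star}$ structure, we already know that $\langle X,\rho,Y\rangle = \langle \mc F(A),\sigma,\mc I(A)\rangle$ where $\mb A = \langle A,\land,\lor\rangle$ is the distributive bisemilattice obtained by setting, for $a,b\in A$, $a\leq_{\land}b$ iff $X_{a}\subseteq X_{b}$ and $a\leq_{\lor}b$ via the order induced by $Y$, and with $\sigma(X_{a}) = \overline{Y}_{a}$. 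It remains to promote $\mb A$ to a De Morgan bisemilattice and to check that $^{\star}$ is the map induced by the resulting involution.

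The key step is to define the involution on $A$. Since $^{\star}:\overline Y^{*}\to X^{*}$ is a bijective order-dual isomorphism and the generating sets $\overline Y_{a}$ index $A$, for each $a\in A$ there is a unique $b\in A$ with $(\overline Y_{a})^{\star} = X_{b}$; set $a' := b$. First I would check that $'$ is well defined and an involution: from $^{\star}\circ\rho = \rho^{-1}\circ{^{\star}}^{-1}$ together with $\rho(X_{a}) = \overline Y_{a}$ one computes that applying $'$ twice returns the original index (this is the same computation as the displayed chain ending ``$=\upr x''=\upr x$'' in the proof of Theorem \ref{thm: rpr}). Next I would verify the De Morgan laws: because $^{\star}$ is an order-dual isomorphism, it sends joins in $\overline Y^{*}$ to meets in $X^{*}$, i.e. $(\overline Y_{a}\cup\overline Y_{b})^{\star} = X_{a'}\cap X_{b'}$; combined with the identity $\overline Y_{a}\cup\overline Y_{b} = \overline Y_{a\lor b}$ (which holds in $Y = \mc I(A)$, established inside Theorem \ref{thm: toprepr}), this gives $X_{(a\lor b)'} = X_{a'}\cap X_{b'} = X_{a'\land b'}$, hence $(a\lor b)' = a'\land b'$ by the faithfulness of $a\mapsto X_{a}$; the dual law is symmetric. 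Boundedness: the least element $\overline 0$ of $\overline Y^{*}$ must be $\overline Y_{e}$ for a unique $e\in A$, which one checks is a bottom for $\leq_{\lor}$, so $0:=e$ works, and $1:=0'$ supplies the top for $\leq_{\land}$ via the order-dual isomorphism. This makes $\mb L := \langle A,\land,\lor,',0,1\rangle$ an object of $\mc{DDBS}$.

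Finally I would show that the 2space$^{\star}$ we recover from $\mb L$ via Theorem \ref{thm: toprepr2} is the one we started with. The underlying 2space agrees with $\langle X,\rho,Y\rangle$ by Theorem \ref{thm: oneway}; the associated $^{\star}$-map by construction sends $\overline{Y_{a}}$ to $X_{a'}$, which is exactly the original $^{\star}$ by our definition of $'$; and the distinguished element $\overline 0 = \overline Y_{0}$ matches. Hence $\langle X,\rho,^{\star},Y\rangle = S(\mb L)$, as required.

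I expect the main obstacle to be purely notational rather than conceptual: one has to be careful that the least element $\overline 0$ postulated in Definition \ref{def:2sp*} really is of the form $\overline Y_{a}$ for some index $a$ (this follows since $\overline Y^{*}$ is generated, under finite unions, by the $\overline Y_{a}$, and a least element of a join-generated semilattice must itself be a generator), and that the order-dual isomorphism $^{\star}$ interacts correctly with $\rho$ so that the involution is genuinely an involution — the compatibility condition $^{\star}\circ\rho = \rho^{-1}\circ{^{\star}}^{-1}$ is precisely what is needed, and unwinding it is the one place where a short but slightly fiddly computation, already carried out in Theorem \ref{thm: rpr}, is unavoidable.
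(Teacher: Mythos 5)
Your proposal is correct and follows essentially the same route as the paper's own proof: invoke Theorem \ref{thm: oneway} to recover the distributive bisemilattice on the index set, define $a'$ as the index of $(\rho(X_a))^{\star}=(\overline Y_a)^{\star}$, read off $0$ and $1$ from $\rho^{-1}(\overline 0)$ and $\overline 0^{\star}$, and check that the resulting $S(\mb L)$ is the given 2space$^{\star}$. The verifications you spell out (involutivity via $^{\star}\circ\rho=\rho^{-1}\circ{^{\star}}^{-1}$, De Morgan laws, boundedness) are exactly the steps the paper declares ``immediate to verify.''
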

\begin{proof}
If $\langle X,\rho ,^{\star},Y\rangle$ is a 2space$^{\star}$ over a set $Z$, with a subbase indexed by a set $A$, then $X,\,Y$ can be regarded, by Theorem \ref{thm: oneway}, as the filter space of the meet semilattices $\langle A, \leq_{\land}\rangle$ and $\langle A, \leq_{\lor}\rangle$, respectively. Upon setting $a'$ as the index uniquely associated to $(\rho(X_{a}))^{\star}$, and $0,1$ the indexes associated to $\rho^{-1}(\overline0)$ and $\overline0^\star$, respectively, it is immediate to verify that $\langle X,\rho,^{\star} ,Y\rangle$ is the 2space $\langle \mc F(A),\rho,^{\star} ,\mc I(A)\rangle$ arising from the De Morgan bisemilattice $\mb A=\langle A,\land, \lor,',0,1\rangle$.
\end{proof}

\begin{lemma}\label{lem:uniq-morph2}
If $(\psi,\chi)$ is a morphism between the 2spaces$^{\star}$ $S(\mb L)=\langle \mc F(L), \rho,^{\star}, \mc I(L)\rangle$ and $S(\mb M)=\langle \mc F(M), \sigma,^{\star}, \mc I(M)\rangle$, then there is a unique  homomorphism $f:\mathbf M\to \mathbf L$ between the De Morgan bisemilattices $\mb M$ and $\mb L$ such that $(\psi,\chi)=(S(f),S(f))$.
\end{lemma}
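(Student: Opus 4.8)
The plan is to reduce this statement to the already-established Lemma \ref{lem:uniq-morph} for the reduct $2$spaces, and then check that the extra datum (the $^{\star}$ operation) is automatically tracked by the homomorphism produced there. Concretely, by Theorem \ref{thm: oneway1} we may assume the $2$spaces$^{\star}$ are literally of the form $\langle \mc F(L),\rho,^{\star},\mc I(L)\rangle$ and $\langle \mc F(M),\sigma,^{\star},\mc I(M)\rangle$, with $L,M$ the index sets of the subbases. A \tsp-morphism $(\psi,\chi)$ is in particular a $2$space morphism, so Lemma \ref{lem:uniq-morph} already yields a \emph{unique} bisemilattice homomorphism $f:\mb M\to\mb L$ with $(\psi,\chi)=(S(f),S(f))$, characterised by $f(a)=b$ iff $\psi^{-1}(X_a)=Y_b$ iff $\chi^{-1}(\overline X_a)=\overline Y_b$. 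So uniqueness is for free, and the only thing left is to verify that this same $f$ preserves $'$, $0$ and $1$, i.e.\ that it is a homomorphism of De Morgan bisemilattices, not merely of the reducts.

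The key step is the preservation of $'$. I would argue as follows. Fix $a\in M$ and let $b=f(a)$, so $\psi^{-1}(X_a)=Y_b$ in the index-set identification (equivalently $\chi^{-1}(\overline X_a)=\overline Y_b$). Recall from the construction in Theorem \ref{thm: toprepr2} that $(\overline{Y_a})^{\star}=X_{a'}$, i.e.\ $\star$ sends the subbasic set indexed by $a$ to the one indexed by $a'$; the same holds with $L$ and $M$. Now apply the commuting square \eqref{diag-star} from Definition \ref{def:2mor*}, which is exactly the statement $\chi^{-1}\circ {^{\star_W}}^{-1} = {^{\star_Y}}^{-1}\circ \psi^{-1}$ on $Z^{*}$. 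Evaluating both sides at $X_a$: the right-hand side gives $(\psi^{-1}(X_a))^{\star^{-1}} = (Y_b)^{\star^{-1}} = \overline{Y_{b'}}$ (since $(\overline{Y_{b'}})^{\star}=X_{b'}$... more directly, $X_b^{\star^{-1}}=\overline{Y_{b'}}$ because $\overline{Y_{b'}}^{\star}=X_{b'}$ and $\star$ is a bijection... let me just say: $X_b = X_{b''} = (\overline{Y_{b'}})^{\star}$, hence $X_b^{\star^{-1}}=\overline{Y_{b'}}$); the left-hand side gives $\chi^{-1}(X_a^{\star^{-1}}) = \chi^{-1}(\overline{Y_{a'}}) = \overline{Y_{f(a')}}$ by the very characterisation of $f$ via $\chi$. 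Comparing, $\overline{Y_{b'}}=\overline{Y_{f(a')}}$, and since the subbasic sets are distinct for distinct indices (total separation), $f(a)'=b'=f(a')$. Thus $f$ preserves $'$.

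For the constants, $0$ and $1$ are term-definable from $\bot=\rho^{-1}(\overline 0)$ and $\top=\overline 0^{\star}$ in the algebra $X^{*}$, and Lemma \ref{lem:morph trans3} already shows that any \tsp-morphism induces a map $\psi^{*}$ preserving $\bot$ and $\top$; since $\psi^{*}=S(f)^{*}$ acts on subbasic sets as $X_a\mapsto X_{f(a)}$, this forces $f(0_M)=0_L$ and $f(1_M)=1_L$ in the index sets. Hence $f:\mb M\to\mb L$ is a homomorphism in $\mc{DDBS}$, it satisfies $(\psi,\chi)=(S(f),S(f))$, and its uniqueness is inherited from Lemma \ref{lem:uniq-morph}. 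The main obstacle, such as it is, is purely bookkeeping: keeping the identifications between subbasic clopen sets and their indices straight through the two commuting squares \eqref{diag1} and \eqref{diag-star}, and being careful that $\star$ is a map $\overline Y^{*}\to X^{*}$ (so its inverse goes the other way) when chasing \eqref{diag-star}.
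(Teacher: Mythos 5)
Your proposal is correct and takes essentially the same route as the paper: existence and uniqueness of the bisemilattice homomorphism $f$ with $(\psi,\chi)=(S(f),S(f))$ are taken from Lemma \ref{lem:uniq-morph}, and compatibility with $'$, $0$, $1$ comes from the \tsp-morphism data. Your explicit index-level chase of condition \eqref{cond:star} (and the check for the constants) merely re-derives what Lemma \ref{lem:morph trans3} already provides, which is exactly what the paper's one-line proof invokes implicitly.
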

\begin{proof}
Let us observe that the homomorphism $f$, between the De Morgan bisemilattices $\mb M$ and $\mb L$, induced by the 2space$^{\star}$ morphism $(\psi,\chi)$ is, in particular, an homomorphism between bisemilattices. Therefore, the claim follows from Lemma \ref{lem:uniq-morph}.
\end{proof}
As a consequence of the results in this section, we can state, as a corollary, the following:

\begin{corollary}\label{cor:dual2}
The categories of De Morgan bisemilattices and 2spaces$^{\star}$, with respective morphisms, are dual.\end{corollary}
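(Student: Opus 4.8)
The plan is to assemble the duality from the building blocks already established for $\mc{DDBS}$ in this section, exactly mirroring the structure of the proof of Corollary~\ref{cor:dual} for $\mc{DBS}$. Concretely, I would set up two contravariant functors: the functor $S$ sending a De Morgan bisemilattice $\mb L$ to the 2space$^{\star}$ $S(\mb L)=\langle \mc F(L),\rho,^{\star},\mc I(L)\rangle$ of Theorem~\ref{thm: toprepr2}, and a homomorphism $f\colon\mb L\to\mb M$ to the pair $S(f)=(f^{-1},f^{-1})$, which is a \tsp-morphism by Lemma~\ref{lem:morph trans4}; and conversely the functor $(\cdot)^{*}$ sending a 2space$^{\star}$ $\langle X,\rho,^{\star},Y\rangle$ to the De Morgan bisemilattice $\langle X^{*},+,\cdot,\dgr,\bot,\top\rangle$ of Lemma~\ref{lem:2sp-demor}, and a \tsp-morphism $(\psi,\chi)$ to the De Morgan homomorphism $\psi^{*}=\psi^{-1}$, which lands in the right category by Lemma~\ref{lem:morph trans3}.

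Next I would verify functoriality of both assignments. For $S$ this amounts to checking $S(\mathrm{id}_{\mb L})=(\mathrm{id},\mathrm{id})$ and $S(g\circ f)=S(f)\circ S(g)$, both of which follow immediately from $(g\circ f)^{-1}=f^{-1}\circ g^{-1}$ on preimages; for $(\cdot)^{*}$ the corresponding identities again reduce to the preimage calculus. Then come the two natural isomorphisms. On the algebra side, $\mb L\cong(S(\mb L))^{*}$ is precisely the content of Theorem~\ref{thm: rpr} (the map $x\mapsto\upr x$ is a De Morgan isomorphism onto $\langle X^{*},\cdot,+,\dgr,\bot,\top\rangle$), and naturality in $\mb L$ is the commuting square expressing $\upr f(a)=S(f)^{-1}(\upr a)$, which is the computation already performed inside Lemma~\ref{lem:morph trans 2}/\ref{lem:morph trans4}. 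On the space side, $\langle X,\rho,^{\star},Y\rangle\cong S((X^{*})^{*})$ is Theorem~\ref{thm: oneway1}, and the fact that every \tsp-morphism arises uniquely as $S(f)$ for a unique De Morgan homomorphism $f$ is Lemma~\ref{lem:uniq-morph2}; together these give that $S$ is full, faithful, and essentially surjective, which (with the object-level bijection) yields the dual equivalence. So the proof is essentially a bookkeeping argument: cite Lemma~\ref{lem:2sp-demor}, Lemma~\ref{lem:morph trans3}, Lemma~\ref{lem:morph trans4}, Theorem~\ref{thm: toprepr2}, Theorem~\ref{thm: oneway1}, and Lemma~\ref{lem:uniq-morph2}, and observe that they are exactly the hypotheses of the standard criterion for a contravariant equivalence of categories.

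I do not expect a genuine obstacle here, since all the analytic work has been front-loaded into the preceding lemmas; the one point that deserves a sentence of care is checking that composition of \tsp-morphisms is again a \tsp-morphism (i.e. that the class of morphisms is closed under composition so that we really have a category), which means verifying that condition~\eqref{cond:star} is preserved under pasting the two commuting squares of type \eqref{diag-star} — a routine diagram chase. The mildest subtlety is notational: the symbol $S(\mb L)$ is now overloaded between Section~\ref{sec:top} and Section~\ref{sec:demor}, so in writing the proof I would be explicit that we are using the 2space$^{\star}$ version throughout and that the underlying 2space-morphism data is inherited from Corollary~\ref{cor:dual}.

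\begin{proof}
This is a direct consequence of the results established in this section. Define the contravariant functor $S$ from $\mc{DDBS}$ to the category of 2spaces$^{\star}$ by sending a De Morgan bisemilattice $\mb L$ to $S(\mb L)=\langle \mc F(L),\rho,^{\star},\mc I(L)\rangle$ (Theorem~\ref{thm: toprepr2}) and a homomorphism $f\colon\mb L\to\mb M$ to $S(f)=(f^{-1},f^{-1})$; by Lemma~\ref{lem:morph trans4} this is a \tsp-morphism, and functoriality follows from the preimage calculus, exactly as in the case of $\mc{DBS}$. In the opposite direction, the functor $(\cdot)^{*}$ sends a 2space$^{\star}$ $\langle X,\rho,^{\star},Y\rangle$ to the De Morgan bisemilattice $\langle X^{*},+,\cdot,\dgr,\bot,\top\rangle$ (Lemma~\ref{lem:2sp-demor}) and a \tsp-morphism $(\psi,\chi)$ to $\psi^{*}=\psi^{-1}$, which is a De Morgan bisemilattice homomorphism by Lemma~\ref{lem:morph trans3}. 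Theorem~\ref{thm: rpr} provides, for each $\mb L$ in $\mc{DDBS}$, an isomorphism $\mb L\cong(S(\mb L))^{*}$ via $x\mapsto\upr x$, and this family is natural in $\mb L$ by the computation $\upr f(a)=S(f)^{-1}(\upr a)$ carried out in the proof of Lemma~\ref{lem:morph trans4}. Conversely, Theorem~\ref{thm: oneway1} shows that every 2space$^{\star}$ is of the form $S(\mb L)$, and Lemma~\ref{lem:uniq-morph2} shows that every \tsp-morphism between spaces of this form is uniquely of the form $(S(f),S(f))$ for a unique De Morgan homomorphism $f$; hence $S$ is full, faithful, and essentially surjective. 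Therefore $S$ and $(\cdot)^{*}$ form a dual equivalence between $\mc{DDBS}$ and the category of 2spaces$^{\star}$.
\end{proof}
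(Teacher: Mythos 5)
Your proposal is correct and follows essentially the same route as the paper, which states this corollary simply as an assembly of the section's results (Theorem~\ref{thm: toprepr2}, Lemmas~\ref{lem:2sp-demor}, \ref{lem:morph trans3}, \ref{lem:morph trans4}, Theorem~\ref{thm: oneway1}, and Lemma~\ref{lem:uniq-morph2}), exactly mirroring Corollary~\ref{cor:dual}. Your explicit spelling-out of functoriality, naturality, and closure of \tsp-morphisms under composition is just a more detailed write-up of the same bookkeeping the paper leaves implicit.
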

Let us close this section by noticing that the machinery we have presented so far can be extended with ease to cover the case of $\mathcal{IDBS}$. Indeed, it will be enough to require in the definition of 2space$^{\star}$ $\langle X,\rho, ^{\star}, Y\rangle$ that, for all $H,K\in X^{*}$:
\begin{equation}\label{eq:last}
 \rho(H\cap(\rho(K))\cmp)\subseteq\rho(H\cap K).
\end{equation}
In fact, condition \eqref{eq:last} is nothing but the translation into the present context of condition \ref{eq:hey} in Theorem \ref{thm: rpr1}, which provides the Balbes' type representation of involutive bisemilattices.

\subsection*{Acknowledements}
The author gratefully acknowledges the support of the Horizon 2020 program of the European Commission: SYSMICS project, Proposal Number: 689176, MSCA-RISE-2015, the support of the Italian Ministry of Scientific Research (MIUR) within the FIRB project ``Structures and Dynamics of Knowledge and Cognition'', Cagliari, Proposal Number: F21J12000140001, and the support of Fondazione Banco di Sardegna within the project ``Science and its Logics: The Representation's Dilemma'', Cagliari, Proposal Number: F72F16003220002.
\\ The writer wishes to express his gratitude to Francesco Paoli for calling his attention to the topic of this article, and Jos\'e Gil-F\'erez for the insightful discussions. Finally, the author feels deeply indebted to Heinrich Wansing and to the anonymous Studia Logica's reviewers for their careful advices, which helped a lot to improve this article.

\end{document}